\DeclareRobustCommand{\lyxsout}[1]{\ifx\\#1\else\sout{#1}\fi}
\theoremstyle{plain}
\newtheorem{thm}{\protect\theoremname}[section]
  \theoremstyle{plain}
  \newtheorem{prop}[thm]{\protect\propositionname}
  \theoremstyle{plain}
  \newtheorem{question}[thm]{\protect\questionname}
  \theoremstyle{plain}
  \newtheorem{lem}[thm]{\protect\lemmaname}
  \theoremstyle{plain}
  \newtheorem{cor}[thm]{\protect\corollaryname}
  \theoremstyle{remark}
  \newtheorem{rem}[thm]{\protect\remarkname}
  \theoremstyle{remark}
  \newtheorem{claim}[thm]{\protect\claimname}
\date{}
\providecommand\phantomsection{}
\newenvironment{boldproof}[1][\proofname] {\par\pushQED{\qed}\normalfont\topsep6\p@\@plus6\p@\relax\trivlist\item[\hskip\labelsep\bfseries#1\@addpunct{.}]\ignorespaces}{\popQED\endtrivlist\@endpefalse}
  \providecommand{\claimname}{Claim}
  \providecommand{\corollaryname}{Corollary}
  \providecommand{\lemmaname}{Lemma}
  \providecommand{\propositionname}{Proposition}
  \providecommand{\questionname}{Question}
  \providecommand{\remarkname}{Remark}
\providecommand{\theoremname}{Theorem}
\begin{document}
\global\long\def\C{\mathcal{C}}
\global\long\def\F{\mathcal{F}}
\global\long\def\h{\mathcal{\mathcal{H}}}
\global\long\def\Hanm{\h(a,n,m)}
\global\long\def\o{\mathcal{\mathcal{O}}}
\global\long\def\Oanm{\o(a,n,m)}
\global\long\def\Otanm{\tilde{\o}(a,n,m)}
\global\long\def\T{\mathcal{\mathcal{T}}}
\global\long\def\naturals{\mathbf{N}}
\global\long\def\tbd{{\color{red}??}}
\global\long\def\ex{\operatorname{ex}}
\global\long\def\E{\operatorname{E}}
\global\long\def\red{\mathrm{red}}
\global\long\def\blue{\mathrm{blue}}
\global\long\def\abs#1{\left|#1\right|}

\title{On subgraphs of $C_{2k}$-free graphs and a problem of Kühn and Osthus}

\author{D\'aniel Gr\'osz\thanks{Department of Mathematics, University of Pisa. e-mail: \protect\href{mailto:groszdanielpub@gmail.com}{groszdanielpub@gmail.com}}
\and  Abhishek Methuku\thanks{Department of Mathematics, Central European University, Budapest,
Hungary. e-mail: \protect\href{mailto:abhishekmethuku@gmail.com}{abhishekmethuku@gmail.com}} \and  Casey Tompkins\thanks{Alfr\'ed R\'enyi Institute of Mathematics, Hungarian Academy of
Sciences. e-mail: \protect\href{mailto:ctompkins496@gmail.com}{ctompkins496@gmail.com}}}
\maketitle
\begin{abstract}
Let $c$ denote the largest constant such that every $C_{6}$-free
graph $G$ contains a bipartite and $C_{4}$-free subgraph having
$c$ fraction of edges of $G$. Gy\H ori \emph{et al}.\ showed that
$\frac{3}{8}\le c\le\frac{2}{5}$. We prove that $c=\frac{3}{8}$.
More generally, we show that for any $\varepsilon>0$, and any integer
$k\ge2$, there is a $C_{2k}$-free graph $G_{1}$ which does not
contain a bipartite subgraph of girth greater than $2k$ with more
than $\left(1-\frac{1}{2^{2k-2}}\right)\frac{2}{2k-1}(1+\varepsilon)$
fraction of the edges of $G_{1}$. There also exists a $C_{2k}$-free
graph $G_{2}$ which does not contain a bipartite and $C_{4}$-free
subgraph with more than $\left(1-\frac{1}{2^{k-1}}\right)\frac{1}{k-1}(1+\varepsilon)$
fraction of the edges of $G_{2}$. 

One of our proofs uses the following statement, which we prove using
probabilistic ideas, generalizing a theorem of Erd\H os: For any
$\varepsilon>0$, and any integers $a$, $b$, $k\ge2$, there exists
an $a$-uniform hypergraph $H$ of girth greater than $k$ which does
not contain any $b$-colorable subhypergraph with more than $\left(1-\frac{1}{b^{a-1}}\right)\left(1+\varepsilon\right)$
fraction of the hyperedges of $H$. We also prove further generalizations
of this theorem.\smallskip{}

In addition, we give a new and very short proof of a result of Kühn
and Osthus, which states that every bipartite $C_{2k}$-free graph
$G$ contains a $C_{4}$-free subgraph with at least $1/(k-1)$ fraction
of the edges of $G$. We also answer a question of Kühn and Osthus
about $C_{2k}$-free graphs obtained by pasting together $C_{2l}$'s
(with $k>l\ge3$).
\end{abstract}

\section{Introduction}

\begin{comment}
Definitions of Berge-cycle, linear hypergraph, girth. We consider
Berge-cycles of length 2 when talking about girth, and no 2-cycle
implies the hypergraph is linear.
\end{comment}

Let $e(H)$ denote the number of (hyper)edges in a (hyper)graph $H$.
For a family of graphs $\F$, let $\ex(n,\F)$ denote the maximum
number of edges in an $n$-vertex graph which does not contain any
$\text{\ensuremath{F\ensuremath{\in\F}}}$ as a subgraph. (In the
case when $\F=\{F\}$, we write simply $\ex(n,F)$.) The girth of
a graph is defined as the length of a shortest cycle if it exists,
and infinity otherwise. In \cite{Gyori:1997}, Gy\H ori proved that
every bipartite, $C_{6}$-free graph contains a $C_{4}$-free subgraph
with at least half as many edges. Later K\"uhn and Osthus \cite{Kuhn:2005}
generalized this result by showing
\begin{thm}[K\"uhn and Osthus \cite{Kuhn:2005}]
\label{thm:Kuhn_Osthus}Let $k\ge3$ be an integer and $G$ a $C_{2k}$-free
bipartite graph. Then $G$ contains a $C_{4}$-free subgraph $H$
with $e(H)\ge\frac{e(G)}{k-1}$.
\end{thm}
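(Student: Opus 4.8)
The idea is to run a breadth-first search and split $E(G)$ into $k-1$ pieces, each of which is $C_4$-free; the densest piece then has at least $e(G)/(k-1)$ edges and is the required $H$.

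Assume $G$ is connected (treat components separately). Fix a root $r$, let $L_i$ be the set of vertices at distance $i$ from $r$, and fix a BFS tree $T$; write $\pi(v)$ for the $T$-parent of $v\neq r$ and $d(v)$ for its depth. Since $G$ is bipartite, its colour classes are $\bigcup_i L_{2i}$ and $\bigcup_i L_{2i+1}$, so every edge of $G$ joins some $L_i$ to $L_{i+1}$. The mechanism I want to exploit is the following. Given a $C_4$ with vertices $a,c,b,d$ in cyclic order (so $a$ and $b$ share the neighbours $c$ and $d$), bipartiteness forces $a,b$ to lie at equal levels or at levels differing by $2$, and likewise $c,d$, so the whole $C_4$ occupies at most three consecutive levels. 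Letting $w$ be the lowest common ancestor of $a$ and $b$ in $T$ and concatenating the $T$-paths $a\to w$ and $w\to b$ with a suitable two-edge arc of the $C_4$ produces — after a short check that the resulting closed walk is a simple cycle — a cycle of length $2+2(d(a)-d(w))$ when $d(a)=d(b)$ (and an analogous length in the level-difference-$2$ case). Since $G$ has no $C_{2k}$, we get $d(a)-d(w)\neq k-1$, and the same holds for the pair $c,d$.

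Accordingly I would colour an edge $\{u,v\}$ (with $d(u)=d(v)-1$) by a residue modulo $k-1$ read off from $T$ — the most natural first attempt being the depth of the lowest common ancestor of $u$ and $v$ modulo $k-1$ (this depth equals $d(u)$ exactly when $u=\pi(v)$) — and then verify that no $C_4$ has all four edges of one colour. For a $C_4$ spanning three levels this is immediate from tracking the levels of the ``middle'' vertices; the delicate case is a ``flat'' $C_4$ lying inside a single level-pair $L_\ell\cup L_{\ell+1}$, where a monochromatic $C_4$ is precisely one that forces the ancestor-distance $d(a)-d(w)$ to be a positive multiple of $k-1$. The value $k-1$ is ruled out by the cycle-length computation above, but the larger multiples are not, so this exact colouring does not yet suffice.

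Closing that gap is the step I expect to be the main obstacle. One route is to replace ``depth of the lowest common ancestor'' by a finer edge-invariant that also records how far $a$ and $b$ descend below $w$ along $T$, which should eliminate the $2(k-1),3(k-1),\dots$ possibilities. An alternative is to dispose of the bad flat $C_4$'s separately within each level-pair: choose, for every vertex, which of its upward edges to retain so that no two vertices at a common level keep two common neighbours, while still retaining a $1/(k-1)$ fraction of that level-pair's edges and not interfering with the three-level $C_4$'s already handled by the levels. Either way, once $E(G)$ has been split into $k-1$ genuinely $C_4$-free subgraphs, taking the densest one finishes the proof.
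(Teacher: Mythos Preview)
Your proposal is not a proof: you yourself identify the ``main obstacle'' and then offer two speculative fixes without carrying either out. That gap is genuine. With the colouring by $d(\mathrm{lca}(u,v))\bmod(k-1)$, a flat $C_4$ inside a single level pair $L_\ell\cup L_{\ell+1}$ can be monochromatic whenever the relevant ancestor depths happen to agree modulo $k-1$, and $C_{2k}$-freeness only rules out the value $k-1$, not $2(k-1),3(k-1),\dots$, exactly as you say. Your first fix (``a finer edge-invariant that also records how far $a$ and $b$ descend below $w$'') would need unboundedly many values to separate all multiples of $k-1$, so it no longer gives a partition into $k-1$ classes; you would have to explain why only $k-1$ residues suffice, and nothing in the BFS picture does that. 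Your second fix (handle flat $C_4$'s separately inside each level pair while keeping a $1/(k-1)$ fraction) is circular: the bipartite graph between $L_\ell$ and $L_{\ell+1}$ is itself an arbitrary $C_{2k}$-free bipartite graph, so you are assuming the theorem for it. There is a smaller problem earlier as well: the ``short check'' that the closed walk $a\to w\to b\to c\to a$ is a simple $C_{2k}$ can fail when $c,d$ lie at level $d(a)-1$ and coincide with $\pi(a),\pi(b)$, in which case the only simple cycles you can extract have length $2k-2$, not $2k$, and give no contradiction.

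The paper's argument is entirely different and avoids BFS. Fix arbitrary linear orders on the two colour classes $A$ and $B$, and put a partial order on $E(G)$ by declaring $(a,b)<_p(a',b')$ whenever there is a chain of $C_4$'s stepping strictly upward in both coordinates. A chain of length $k$ in this poset visibly contains a $C_{2k}$, so every chain has length at most $k-1$; by Mirsky's theorem $E(G)$ decomposes into $k-1$ antichains, and any $C_4$ contains a comparable pair, so each antichain is $C_4$-free. This gives the desired $k-1$-partition in one stroke, with no case analysis on levels.
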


In Section \ref{sec:simpleproof} we give a new short proof of their
result. The complete bipartite graphs $K_{k-1,m}$ (for large enough
$m$) show that the factor $\frac{1}{k-1}$ cannot be replaced by
anything larger (see Proposition $5$ in \cite{Kuhn:2005}).

F\"uredi, Naor and Verstra\"ete \cite{Furedi:2006} gave another
generalization of Gy\H ori's theorem by showing that every $C_{6}$-free
graph $G$ has a subgraph of girth larger than $4$ with at least
half as many edges as $G$. Again, $K_{2,m}$ shows that this factor
cannot be improved. It follows that $\ex(n,C_{6})\le2\cdot\ex(n,\{C_{4},C_{6}\})$.
Since any graph has a bipartite subgraph with at least half as many
edges, Theorem \ref{thm:Kuhn_Osthus} shows that $\ex(n,C_{2k})\le2(k-1)\cdot\ex(n,\{C_{4},C_{2k}\})$.
These results confirm special cases of the compactness conjecture
of Erd\H os and Simonovits \cite{erdHos1982compactness} which states
that for every finite family $\F$ of graphs, there exists an $F\in\F$
such that $\ex(n,F)=O(\ex(n,\F))$.

Since any $C_{6}$-free graph contains a bipartite subgraph with at
least half as many edges, using any of the results above it is easy
to show that any $C_{6}$-free graph $G$ has a bipartite, $C_{4}$-free
subgraph with at least $\frac{1}{4}$ of the edges of $G$. Gy\H ori,
Kensell and Tompkins \cite{gyHori2015making} improved this factor
by showing that 
\begin{thm}[Gy\H ori, Kensell and Tompkins \cite{gyHori2015making}]
If $c$ is the largest constant such that every $C_{6}$-free graph
$G$ contains a $C_{4}$-free and bipartite subgraph $B$ with $e(B)\ge c\cdot e(G)$,
then $\frac{3}{8}\le c\le\frac{2}{5}$.
\end{thm}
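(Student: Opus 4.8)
The plan is to prove the two inequalities separately. For $c\le\frac{2}{5}$ it suffices to produce one $C_{6}$-free graph in which no bipartite, $C_{4}$-free subgraph has more than a $\frac{2}{5}$ fraction of the edges; I would take $G$ to be the disjoint union of $N$ copies of $K_{5}$. Each component has only five vertices, so $G$ is $C_{6}$-free and $e(G)=10N$. A bipartite $C_{4}$-free graph on five vertices has at most four edges: it is contained in some $K_{a,b}$ with $a+b=5$; a star has at most four edges, in $K_{2,3}$ two vertices of the three-element side sharing both neighbours would create a $C_{4}$ so at most one such vertex has degree two, and a bipartite five-edge graph on five vertices must contain a cycle, necessarily a $C_{4}$. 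Hence every bipartite $C_{4}$-free $H\subseteq G$ has $e(H)\le 4N=\frac{2}{5}e(G)$, with equality for a $K_{1,4}$ inside each copy, so $c\le\frac{2}{5}$.

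For the lower bound $c\ge\frac{3}{8}$ I would make $G$ bipartite and then $C_{4}$-free while coupling the two steps. Colour each vertex of $G$ with $0$ or $1$ uniformly and independently and let $B$ be the resulting cut; then $B$ is bipartite and $C_{6}$-free with $\E[e(B)]=\frac{1}{2}e(G)$. Fix also a uniformly random linear order on $V(G)$, and keep an edge $uv$ of $B$, say with $u$ in colour class $0$ and $v$ in class $1$, exactly when $v$ comes first, in this order, among the class-$1$ neighbours of $u$, \emph{or} $u$ comes first among the class-$0$ neighbours of $v$. The self-contained lemma here is that the kept edges form a $C_{4}$-free graph: given a would-be $4$-cycle, split on which of the two alternatives holds for each of its four edges and reach a contradiction by comparing positions in the order.

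To analyse this, fix $uv\in E(G)$, condition on $c(u)\ne c(v)$, and let $s_{u}$ (resp.\ $s_{v}$) be the number of $G$-neighbours of $u$ (resp.\ of $v$) that received the same colour as $v$ (resp.\ as $u$); the two ``comes first'' events are then conditionally independent with probabilities $1/s_{u}$ and $1/s_{v}$, since the class-$1$ neighbours of $u$ and the class-$0$ neighbours of $v$ are disjoint, so $\Pr[uv\text{ survives}]=\frac{1}{2}\,\E\bigl[\,1/s_{u}+1/s_{v}-1/(s_{u}s_{v})\,\bigr]$. The lower bound then reduces to showing that this quantity averages to at least $\frac{3}{8}$ over the edges of $G$; one checks that it equals $\frac{3}{8}$ on every edge of $K_{5}$ (where $s_{u}+s_{v}=5$) and tends to $\frac{3}{8}$ on $K_{2,m}$, which is why $\frac{3}{8}$ is exactly the right target. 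The difficulty — and the step I expect to be the main obstacle — is that this rule is weak precisely when $uv$ has both endpoints, and their neighbourhoods, of large degree, so that $s_{u}$ and $s_{v}$ are large; the extreme case is a $C_{6}$-free graph that is already bipartite and $C_{4}$-free, such as an incidence graph of a generalized quadrangle, where one should keep nearly every edge. The resolution should exploit the fact that $C_{6}$-freeness forces such a dense piece to be close to $C_{4}$-free: in a bipartite $C_{6}$-free graph, any two $4$-cycles through a common edge share at least three vertices, because otherwise their union contains a $C_{6}$. Turning this into a clean estimate — for instance by replacing the fixed rule above with the adaptive rule that scans the edges of $B$ in a uniformly random order and keeps each one unless it closes a $4$-cycle, and proving that its expected output is always at least $\frac{3}{8}e(G)$ — is the technical heart; fixing a colouring and order attaining the expectation then yields the desired subgraph.

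An alternative and possibly cleaner route is purely extremal: let $H\subseteq G$ be a \emph{maximum} bipartite $C_{4}$-free subgraph. Maximality forces every edge of $G\setminus H$ to lie on an odd closed walk or a $4$-cycle whose other edges all belong to $H$, and if one can charge these edges to edges of $H$ so that each edge of $H$ receives total charge at most $\frac{5}{3}$, then $e(G\setminus H)\le\frac{5}{3}e(H)$, whence $e(H)\ge\frac{3}{8}e(G)$. Here too the whole difficulty is concentrated in the charge bound, which is where $C_{6}$-freeness must be used.
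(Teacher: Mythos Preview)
Your upper bound argument via disjoint copies of $K_5$ is correct and is exactly what the paper describes.

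For the lower bound $c\ge\frac{3}{8}$, however, your proposal is not a proof but a sketch with its central step missing---and you say so yourself. Your explicit ``first-in-the-order'' rule is $C_4$-free (your case analysis is fine), but, as you observe, on a $d$-regular bipartite graph of girth greater than $6$ it keeps only about a $2/d$ fraction of the edges, far below $\frac{3}{8}$; so that rule cannot possibly yield the bound. You then propose two escape routes---an adaptive greedy scan, and a charging argument on a maximum bipartite $C_4$-free subgraph---but you prove nothing about either. In particular, there is no argument that the adaptive rule keeps at least $\frac{3}{8}e(G)$ edges in expectation, nor any construction of a charging scheme with total charge $\le\frac{5}{3}$ per edge; these are precisely the places where the $C_6$-free hypothesis must do real work, and you have not used it.

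The paper does not reprove this theorem; it only reports that Gy\H{o}ri, Kensell and Tompkins obtain $c\ge\frac{3}{8}$ by ``a probabilistic deletion procedure where they first randomly two-color the vertices, and then delete some additional edges carefully in order to remove the remaining $C_4$'s.'' Your framework matches this description, but the ``careful deletion'' is the whole content of their argument, and that is exactly the part you have left open.
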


The complete graph $K_{5}$ (as well as a graph consisting of vertex
disjoint $K_{5}$'s) gives that $c\le\frac{2}{5}$. To show that $\frac{3}{8}\le c$
they use a probabilistic deletion procedure where they first randomly
two-color the vertices, and then delete some additional edges carefully
in order to remove the remaining $C_{4}$'s. In this paper we show
that $c=\frac{3}{8}$. In fact, we prove the following two general
results; putting $k=3$ in either of the statements below gives that
$c=\frac{3}{8}$. To prove these theorems we will construct graphs
by replacing the hyperedges of certain (probabilistically constructed)
hypergraphs with fixed small graphs.
\begin{thm}
\label{thm:max_c}For any $\varepsilon>0$, and any integer $k\ge2$,
there is a $C_{2k}$-free graph $G$ which does not contain a bipartite
subgraph of girth greater than $2k$ with more than $\left(1-\frac{1}{2^{2k-2}}\right)\frac{2}{2k-1}e(G)(1+\varepsilon)$
edges.
\end{thm}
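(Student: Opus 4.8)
The plan is to realize $G$ as a union of cliques $K_{2k-1}$ pasted along a sparse, ``hard to $2$-color'' uniform hypergraph. I would apply the hypergraph theorem stated in the abstract with $a=2k-1$, $b=2$, and girth parameter greater than $2k$: this produces a $(2k-1)$-uniform hypergraph $H$ of girth greater than $2k$ (in particular $H$ is linear) such that no $2$-colorable subhypergraph of $H$ contains more than $\left(1-\tfrac{1}{2^{2k-2}}\right)(1+\varepsilon)$ fraction of the hyperedges of $H$. Now let $G$ have vertex set $V(H)$, and for every hyperedge $f$ of $H$ put a complete graph on the $2k-1$ vertices of $f$. Since $H$ is linear, two hyperedges share at most one vertex, so these cliques are pairwise edge-disjoint and $e(G)=\binom{2k-1}{2}e(H)$.

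First I would verify that $G$ is $C_{2k}$-free. A cycle of $G$ contained in a single clique has length at most $2k-1$, so it is not a $C_{2k}$. A cycle using edges of two or more cliques must pass between cliques through their common vertices, and recording the cliques and transition vertices met along the cycle yields a Berge cycle of $H$ whose length is at most the length of the cycle; by the girth hypothesis, such a cycle then has length greater than $2k$. Hence $G$ contains no $C_{2k}$. This is the standard ``substituting gadgets along a high-girth hypergraph creates no new short cycles'' argument; only linearity and girth greater than $2k$ are used.

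Next let $B\subseteq G$ be any bipartite subgraph of girth greater than $2k$, and fix a $2$-coloring $c\colon V(G)\to\{0,1\}$ that is proper on $B$ (every edge of $B$ is bichromatic). For a clique $f$: every cycle of a clique on $2k-1$ vertices has length at most $2k-1\le 2k$, while $B$ has girth greater than $2k$, so $B\cap f$ is acyclic, i.e.\ a forest on $2k-1$ vertices, and $|B\cap f|\le 2k-2$; moreover, if $c$ is constant on $f$ then every edge of the clique $f$ is monochromatic and hence absent from $B$, so $|B\cap f|=0$. Therefore $e(B)\le (2k-2)N$, where $N$ is the number of hyperedges on which $c$ is non-constant. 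Those hyperedges form a subhypergraph that $c$ properly $2$-colors, so $N\le\left(1-\tfrac{1}{2^{2k-2}}\right)(1+\varepsilon)e(H)$, and consequently
\[
e(B)\le (2k-2)\left(1-\tfrac{1}{2^{2k-2}}\right)(1+\varepsilon)e(H)=\frac{2k-2}{\binom{2k-1}{2}}\left(1-\tfrac{1}{2^{2k-2}}\right)(1+\varepsilon)\,e(G)=\left(1-\tfrac{1}{2^{2k-2}}\right)\frac{2}{2k-1}(1+\varepsilon)\,e(G),
\]
which is the claimed bound.

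The genuinely substantive input is the hypergraph theorem supplying $H$ (established separately in the paper by a probabilistic deletion argument generalizing Erd\H os); the remainder is the choice of gadget and bookkeeping. The choice of $K_{2k-1}$ is exactly what makes the numbers work: it is large enough that any subgraph of girth greater than $2k$ inside it is merely a spanning forest (retaining at most $2k-2$ of its $\binom{2k-1}{2}$ edges, i.e.\ a $\tfrac{2}{2k-1}$ fraction), yet small enough to contain no $C_{2k}$, and the refinement factor $1-\tfrac{1}{2^{2k-2}}$ comes precisely from the cliques that the bipartition of $B$ forces to be monochromatic. The step most deserving of care in a full write-up is the short-cycle analysis of $G$ — checking that pasting cliques along a high-girth linear hypergraph produces no cycles of length at most $2k$ other than those lying inside one clique.
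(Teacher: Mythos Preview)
Your proposal is correct and matches the paper's proof essentially line for line: the paper also invokes Theorem~\ref{thm:high girth small p} with $a=2k-1$, $b=2$ to obtain $H$, replaces each hyperedge by a $K_{2k-1}$, observes edge-disjointness of the cliques from linearity, and then bounds $e(B)$ by $2k-2$ times the number of non-monochromatic hyperedges. If anything, your write-up is slightly more explicit about the $C_{2k}$-freeness of $G$ (the paper dispatches this with ``It is easy to check''), and you are right to flag that step as the one needing the most care.
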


Note that $K_{2k-1}$ is $C_{2k}$-free, and the only subgraphs with
girth greater than $2k$ are forests, giving an upper bound of $\frac{2}{2k-1}e(G)(1+\varepsilon)$.
The factor $1-\frac{1}{2^{2k-2}}$ is the probability that a random
two-coloring of $K_{2k-1}$ is not monochromatic.
\begin{thm}
\label{thm:C_4-only}For any $\varepsilon>0$, and any integer $k\ge2$,
there is a $C_{2k}$-free graph $G$ which does not contain a bipartite
and $C_{4}$-free subgraph with more than $\left(1-\frac{1}{2^{k-1}}\right)\frac{1}{k-1}e(G)(1+\varepsilon)$
edges.
\end{thm}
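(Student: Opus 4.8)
The plan is to deduce Theorem~\ref{thm:C_4-only} from the hypergraph colouring theorem quoted in the introduction, applied with parameters $a,b$ satisfying $b^{a-1}=2^{k-1}$ — most flexibly with $a=2$ and $b=2^{k-1}$, so that a $2$-uniform hypergraph is an ordinary graph, "$2^{k-1}$-colourable" means "chromatic number at most $2^{k-1}$", and the exponent $1-\tfrac1{b^{a-1}}=1-\tfrac1{2^{k-1}}$ is exactly the probability that two independent uniformly random $2$-colourings of a fixed $(k-1)$-element set differ in at least one coordinate. So first I would fix a large girth parameter $g=g(k)$ and take a graph $H_{0}$ of girth greater than $g$ such that every subgraph $H'$ with $e(H')>\bigl(1-\tfrac1{2^{k-1}}\bigr)(1+\varepsilon)\,e(H_{0})$ has chromatic number exceeding $2^{k-1}$.

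Next I would build $G$ by substitution. Attach to every vertex $v$ of $H_{0}$ a cluster $C_{v}$ of $k-1$ new vertices, and to every edge $uv$ a bipartite gadget $\Gamma_{uv}$ built on $C_{u}\cup C_{v}$ together with $t$ fresh private vertices (with $t$ large), distinct gadgets meeting only in the cluster vertices forced by $H_{0}$; let $G=G(k,t,\varepsilon)$ be the union. The gadget is to be engineered so that: (a) $\Gamma_{uv}$ is $C_{2k}$-free and no $C_{2k}$ is created when copies of it are pasted along a graph of girth $>g$, so $G$ is $C_{2k}$-free; (b) $\Gamma_{uv}$ is "$K_{k-1,t}$-like", meaning every $C_{4}$-free subgraph of it — in particular of the graph of bichromatic edges of any $2$-colouring — has at most $\tfrac{1}{k-1}e(\Gamma_{uv})(1+o_{t}(1))$ edges; and (c) if a $2$-colouring $\chi$ induces the same colouring on $C_{u}$ as on $C_{v}$, then the bichromatic edges of $\chi$ inside $\Gamma_{uv}$ carry only $o_{t}(1)\cdot e(\Gamma_{uv})$ edges of any $C_{4}$-free subgraph.

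Granting the gadget, the proof finishes cleanly. Since $H_{0}$ has large girth, the edge set of $G$ splits over the gadgets, $e(G)=e(H_{0})\cdot e(\Gamma)$, and (again by large girth, as a $C_{4}$ can neither run through two gadgets sharing a single vertex nor around a short gadget-cycle) a subgraph $B\subseteq G$ is $C_{4}$-free iff each $B\cap\Gamma_{uv}$ is. Let $B$ be a bipartite $C_{4}$-free subgraph of $G$, pick a $2$-colouring $\chi$ of $V(G)$ whose bichromatic edges contain $B$, and for $v\in V(H_{0})$ let $\tau(v)\in\{0,1\}^{k-1}$ be the colouring $\chi$ induces on $C_{v}$. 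By (b), $e(B\cap\Gamma_{uv})\le\tfrac{1}{k-1}e(\Gamma)(1+o_{t}(1))$ for every edge, and by (c) this is $o_{t}(1)\cdot e(\Gamma)$ whenever $\tau(u)=\tau(v)$. The edges $uv$ with $\tau(u)\ne\tau(v)$ span a subgraph properly coloured by $\tau$ with $2^{k-1}$ colours, hence there are at most $\bigl(1-\tfrac1{2^{k-1}}\bigr)(1+\varepsilon)\,e(H_{0})$ of them. Summing,
\[
e(B)\ \le\ \Bigl[\bigl(1-\tfrac1{2^{k-1}}\bigr)\tfrac{1}{k-1}(1+\varepsilon)+o_{t}(1)\Bigr]e(G),
\]
and letting $t\to\infty$ absorbs the error into a slightly larger $\varepsilon$. (The case $k=2$, asserting a $C_{4}$-free graph whose maximum cut is only $(\tfrac12+\varepsilon)$ times its edge count, is the hypergraph theorem with $a=b=2$ and large girth; one can also run the whole argument as an induction on $k$.)

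The main obstacle is the gadget itself, and in particular property (c). Property (b) is the easy part: a single $K_{k-1,t}$ with its $(k-1)$-side straddling $C_{u}$ and $C_{v}$ has maximum $C_{4}$-free subgraph of size $t+O_{k}(1)=\tfrac1{k-1}e(1+o_{t}(1))$, inherited by every subgraph; and (a) can be arranged by taking $\Gamma_{uv}$ to be a union of such complete bipartite pieces (or a blown-up/subdivided variant) whose "pattern" on the cluster vertices admits no cycle that would close into a $C_{2k}$ after pasting. The tension is that Kühn–Osthus (Theorem~\ref{thm:Kuhn_Osthus}) already guarantees a $C_{4}$-free subgraph with a $\tfrac1{k-1}$ fraction of the edges inside any bipartite $C_{2k}$-free graph, hence inside the bichromatic part of $\Gamma_{uv}$ under any colouring; so for (c) the degenerate colourings must genuinely force that bichromatic part to shrink to $o_{t}(1)\cdot e(\Gamma)$ — the "bulk" of the gadget has to sit on edges that a monochromatic pair of clusters renders monochromatic, while the pasted graph stays $C_{2k}$-free. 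Designing such a gadget (and making the $o_{t}(1)$ a genuine limit rather than merely an $O_{k}(1)$ constant, presumably by iterating the construction in $k$) is where the real work lies.
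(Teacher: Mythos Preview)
Your framework is clean and the reduction to the $2^{k-1}$-colourability of $H_0$ is set up correctly, but property~(c) is not just ``where the real work lies'' --- it is unattainable for any gadget satisfying~(a). Consider the colouring with $\tau(u)=\tau(v)=(0,\ldots,0)$, so all of $C_u\cup C_v$ is colour~$0$; the adversary still controls the private vertices. If at least half the gadget edges are cluster--private, colour every private vertex~$1$: all cluster--private edges become bichromatic, and picking one such edge per private vertex yields a $C_4$-free subgraph of size at least $e_{cp}/(2(k-1))\ge e(\Gamma)/(4(k-1))$. Otherwise at least half the edges are private--private; take a max-cut colouring on the private vertices, so the bichromatic private--private graph has at least $e_{pp}/2\ge e(\Gamma)/4-O(1)$ edges, is bipartite, and is $C_{2k}$-free by~(a), so by K\"uhn--Osthus it contains a $C_4$-free subgraph of size at least $\tfrac{1}{k-1}\cdot\tfrac{1}{4}e(\Gamma)-O(1)$. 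Either way the bichromatic $C_4$-free subgraph has $\Omega_k(1)\cdot e(\Gamma)$ edges, so~(c) fails. (Iterating in $k$ does not escape this: the private--private part of the gadget would itself have to beat the very bound you are trying to prove.)

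The paper avoids this obstruction by \emph{not} giving each gadget private vertices. Instead it takes a $(k{-}1{+}l)$-uniform \emph{oriented} hypergraph $O$ of girth $>2k$ with the random-like property of Lemma~\ref{lem:randomlike-ordered}, and replaces every hyperedge $(v_1,\ldots,v_{k-1+l})$ by a $K_{k-1,l}$ with left side $\{v_1,\ldots,v_{k-1}\}$ and right side $\{v_k,\ldots,v_{k-1+l}\}$. Crucially, the $l$-side vertices are \emph{shared} among many hyperedges, so the adversary cannot tailor their colours gadget-by-gadget: the random-like property forces, for \emph{every} $2$-colouring, the pair $(u,w)$ (red counts on the two sides) on a random hyperedge to be distributed essentially as if the colours were i.i.d.\ $\mathrm{Bernoulli}(p)$. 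A direct expectation computation over hyperedges --- using Proposition~\ref{prop:complete-bipartite} for $C_4$-free subgraphs of $K_{u,l-w}\cup K_{k-1-u,w}$ and the identity $p^{k}+(1-p)^{k}\ge 2^{-(k-1)}$ --- gives the bound. In short, the paper puts the randomness in the hypergraph structure rather than trying to build a worst-case-proof gadget; your dichotomy good/bad edge is replaced by an averaging over all hyperedges, which is exactly what rescues the factor $1-2^{-(k-1)}$.
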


Theorem \ref{thm:C_4-only} improves the upper bound of $\frac{1}{k-1}e(G)(1+\varepsilon)$,
which is given by the complete bipartite graphs $K_{k-1,m}$. Take
a random bipartition of the vertices of $K_{k-1,m}$ and consider
the bipartite subgraph $B$ between the colour classes of this bipartition.
The factor $\left(1-\frac{1}{2^{k-1}}\right)\frac{1}{k-1}$ in the
above theorem is the limit of the expected value of the fraction of
edges of $K_{k-1,m}$ in the biggest $C_{4}$-free subgraph of $B$
as $m\rightarrow\infty$. (Note that because any graph has a bipartite
subgraph with at least half of its edges, Theorem \ref{thm:Kuhn_Osthus}
implies that every $C_{2k}$-free graph contains a bipartite and $C_{4}$-free
subgraph with at least $\frac{1}{2(k-1)}$ fraction of its edges.)
Interestingly, our proofs use theorems about hypergraphs that are
generalizations of the following theorem of Erd\H os \cite{erdos1967grafok}.

Every graph $G$ has a bipartite subgraph with at least $\frac{1}{2}$
as many edges as $G$, and the complete graph $K_{n}$ shows that
the factor $\frac{1}{2}$ cannot be improved. Interestingly, Erd\H os
showed that even if one requires girth to be large, the factor $\frac{1}{2}$
still cannot be improved. More precisely,%
\begin{comment}
That paper is interesting I hadn't seen it. It seems their goal is
to improve the lower order terms of the maxcut, so for C\_6-free we
actually can say there is a bipartite subgraph with e(G)/2 + C e(G)\textasciicircum{}(7/8)
edges. Yes, we should look at these type of papers, and should we
cite them? I dont know how relevant they are to our paper...
\end{comment}
\begin{thm}[Erd\H os \cite{erdos1967grafok}]
\label{thm:Erdos} For any $\varepsilon>0$, and any integer $k\ge2$,
there exists a graph $G$ with girth greater than $k$ which does
not contain a bipartite subgraph with more than $\frac{1}{2}e(G)\left(1+\varepsilon\right)$
edges.
\end{thm}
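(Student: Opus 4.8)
The plan is to prove the theorem by the probabilistic method, taking $G$ to be a small modification of the random graph $G(n,p)$ with $p=c/n$ for a constant $c=c(\varepsilon)$ that we will choose large, and then deleting a few edges to destroy all short cycles. We may assume $\varepsilon$ is small, since the statement for a larger $\varepsilon$ follows from that for a smaller one. Write $N=\binom n2$, so $\E\, e(G(n,p))=Np\sim cn/2$. I need three events, each holding with probability tending to $1$ as $n\to\infty$:
(i) $e(G(n,p))\ge(1-\varepsilon/8)Np$;
(ii) $G(n,p)$ contains only $O_{c,k}(1)$ cycles of length at most $k$ (in particular $o(e(G(n,p)))$ of them);
(iii) \emph{every} bipartition of $V(G(n,p))$ has at most $(1+\varepsilon/2)\cdot\tfrac12\, e(G(n,p))$ crossing edges.

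Event (ii) is the easy one: the expected number of cycles of length $\ell$ is $\frac{(n)_{\ell}}{2\ell}p^{\ell}\le\frac{c^{\ell}}{2\ell}$, so the expected number of cycles of length at most $k$ is bounded by a constant depending only on $c$ and $k$, and Markov's inequality finishes it. Given (ii), I delete one edge from each cycle of length $\le k$; since $e(G(n,p))\to\infty$ this removes only a $o(1)$ fraction of the edges, so the resulting graph $G$ has girth greater than $k$ and $e(G)\ge(1-o(1))Np$. Any bipartite subgraph of $G$ is a bipartite subgraph of $G(n,p)$, so by (i) and (iii) it has at most $(1+\varepsilon/2)\tfrac12 e(G(n,p))\le(1+\varepsilon)\tfrac12 e(G)$ edges once $n$ is large enough (the last inequality is a routine manipulation of the constants using $e(G)\ge(1-o(1))e(G(n,p))$). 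Choosing $n$ large enough that (i)--(iii) hold simultaneously with positive probability then yields the desired $G$.

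The substance is event (iii), and this is exactly where the freedom to take $c$ large is essential. Fix a bipartition $V=V_1\cup V_2$ with $\abs{V_1}=a$; the number $Y$ of crossing edges is a sum of independent indicators with mean $\mu=a(n-a)p\le \frac{n^2p}{4}=\frac{cn}{4}\sim\tfrac12\E\, e(G(n,p))$, and moreover the number $a(n-a)$ of \emph{potential} crossing pairs satisfies $a(n-a)/N\le \tfrac{n}{2(n-1)}$, a $\tfrac12(1+o(1))$-fraction of all pairs. By a Chernoff bound, $\Pr\bigl[Y\ge(1+\varepsilon/8)\tfrac{cn}{4}\bigr]\le\exp(-c_1\varepsilon^2 cn)$ for an absolute constant $c_1>0$. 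A union bound over all $2^n$ bipartitions shows this fails with probability at most $\exp\bigl(n(\ln 2-c_1\varepsilon^2 c)\bigr)$, which tends to $0$ as soon as $c>\ln 2/(c_1\varepsilon^2)$. Combining this with the lower-tail estimate for $e(G(n,p))$ in (i) gives (iii).

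The one genuinely delicate point is therefore (iii): a typical bounded-degree graph can well have a bipartition cutting a $(1+\Omega(1))\tfrac12$-fraction of its edges, and the conclusion survives only because we tune the density $c$ to the error parameter $\varepsilon$ so that the Chernoff tail for a single bipartition beats the $2^n$ union bound over all bipartitions. Everything else — the first-moment cycle count, the edge deletions, and comparing $e(G)$ with $e(G(n,p))$ — is routine bookkeeping.
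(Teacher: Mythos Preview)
Your argument is correct and is essentially Erd\H{o}s's original proof; the paper obtains Theorem~\ref{thm:Erdos} as the $a=b=2$ case of Theorem~\ref{thm:high girth small p}, whose proof (via Lemma~\ref{lem:randomlike-unordered}, Lemma~\ref{lem:few short cycles}, and Proposition~\ref{prop:combined}) is the same idea cast in the $G(n,m)$ model with Hoeffding's hypergeometric tail in place of your $G(n,p)$ and Chernoff. One small imprecision: event (ii) with an $O_{c,k}(1)$ bound does not hold with probability tending to $1$ (the short-cycle count is asymptotically Poisson with a fixed positive mean), but since (i) and (iii) do hold with probability $1-o(1)$ and you ultimately only use that all three occur with positive probability, the argument stands.
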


In Section \ref{sec:Hypergraph-lemmas}, we prove a series of lemmas
about hypergraphs which are broad generalizations of Theorem \ref{thm:Erdos},
and which may be of independent interest. These lemmas have the theme
that for most hypergraphs, every fixed coloring behaves like a random
coloring with color classes of the same sizes as in the fixed coloring.
Our proof of Theorem \ref{thm:C_4-only} uses these general lemmas
directly. The proof of Theorem \ref{thm:max_c} uses a more direct
analogue of the above statement for hypergraphs: Theorem \ref{thm:high girth small p},
which we present below. We will prove Theorem \ref{thm:high girth small p}
from the more general lemmas.

A Berge-cycle of length $l$ in a hypergraph $H$ is a subhypergraph
consisting of $l\ge2$ distinct hyperedges $\text{\ensuremath{e_{1}},}\ldots,e_{l}$
and containing $l$ distinct vertices $v_{1},\ldots,v_{l}$ (called
its \emph{defining} vertices), such that $v_{i}\in e_{i}\cap e_{i+1}$,
$i=1,\ldots,l$, where addition in the indices is taken modulo $l$.
The girth of a hypergraph $H$ is the length of a shortest Berge-cycle
if it exists, and infinity otherwise. (Note that having girth greater
than 2 implies that no two hyperedges share more than one vertex.)
A hypergraph is $b$-colorable if there is a coloring of its vertices
using $b$ colors so that none of its hyperedges are monochromatic.
Erd\H os and Hajnal \cite{erdos-hajnal} showed the existence of
hypergraphs of any uniformity, arbitrarily high girth and arbitrarily
high chromatic number. Lovász \cite{lovasz-constructive} gave a constructive
proof for this; several newer proofs exist as well. The following
simple proposition is easy to see. We include its proof for completeness.
\begin{prop}
\label{prop:random_coloring}For any integers $a,b\ge2$, every $a$-uniform
hypergraph $H$ contains a $b$-colorable subhypergraph with at least
$\left(1-\frac{1}{b^{a-1}}\right)e(H)$ hyperedges.
\end{prop}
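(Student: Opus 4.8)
The plan is to apply the first moment method. First I would color the vertices of $H$ independently and uniformly at random using $b$ colors. Since $H$ is $a$-uniform, every hyperedge $e$ consists of $a$ \emph{distinct} vertices, so the probability that $e$ is monochromatic is exactly $b\cdot b^{-a}=b^{-(a-1)}$. By linearity of expectation, the expected number of monochromatic hyperedges under such a random coloring is $e(H)/b^{a-1}$.

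Consequently there exists a fixed coloring $\phi\colon V(H)\to\{1,\dots,b\}$ under which the set $M$ of monochromatic hyperedges satisfies $\abs{M}\le e(H)/b^{a-1}$. I would then let $H'$ be the subhypergraph with vertex set $V(H)$ and hyperedge set $E(H)\setminus M$. No hyperedge of $H'$ is monochromatic under $\phi$, so $\phi$ witnesses that $H'$ is $b$-colorable, while
\[
e(H')=e(H)-\abs{M}\ge\left(1-\frac{1}{b^{a-1}}\right)e(H),
\]
which is what we want.

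There is essentially no obstacle here; the only point worth stating explicitly is that $a$-uniformity (in particular, that the $a$ vertices of a hyperedge are distinct) is exactly what makes the monochromaticity probability equal to $b^{-(a-1)}$ rather than something larger. One could alternatively phrase the selection of $\phi$ via a greedy/derandomized argument, but the one-line expectation computation is the cleanest route.
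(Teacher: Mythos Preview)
Your proof is correct and follows essentially the same approach as the paper: a uniformly random $b$-coloring, the computation that each hyperedge is monochromatic with probability $b^{-(a-1)}$, linearity of expectation, and then passing to the subhypergraph of non-monochromatic edges. The only addition is your explicit remark about why distinctness of the $a$ vertices matters, which is a helpful clarification but not a departure from the paper's argument.
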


\begin{proof}
Color each vertex of $H$ randomly and independently, using $b$ colors
with equal probability. For each hyperedge $f$ of $H$, the probability
that $f$ is monochromatic is $\frac{b}{b^{a}}=\frac{1}{b^{a-1}}$.
Therefore, the expected number of monochromatic hyperedges in $H$
is $\frac{e(H)}{b^{a-1}}$. So there exists a coloring of the vertices
of $H$ such that there are at most $\frac{e(H)}{b^{a-1}}$ monochromatic
hyperedges in that coloring. Thus, the subhypergraph of $H$ consisting
of all the non-monochromatic hyperedges of $H$ contains at least
$\left(1-\frac{1}{b^{a-1}}\right)e(H)$ hyperedges and is $b$-colorable,
as desired.
\end{proof}
Again the complete $a$-uniform hypergraph shows that the factor $\left(1-\frac{1}{b^{a-1}}\right)$
cannot be improved in the above proposition. We show that (as in case
of graphs), this factor cannot be improved even if one requires the
girth to be large.
\begin{thm}
\label{thm:high girth small p}For any $\varepsilon>0$, and any integers
$a,b,k\ge2$, there exists an $a$-uniform hypergraph $H$ of girth
more than $k$ which does not contain a $b$-colorable subhypergraph
with more than $\left(1-\frac{1}{b^{a-1}}\right)e(H)\left(1+\varepsilon\right)$
hyperedges.

\begin{comment}
TO DO: high chromatic number graphs (there were two papers, a probabilistic
and a constructive one?) RODL-NESETRIL = probabilistic, LOVASZ =constructive.
Actually, there was a constructive one for the existence of a high
girth, hard to make bipartite graph (not hypergraph) too, wasn't there?
(Alon?)
\end{comment}

\begin{comment}
Perhaps mentions about other related problems/motivations: Turan numbers;
making \_\_\_-free graphs \_\_\_-free and/or bipartite.
\end{comment}
\end{thm}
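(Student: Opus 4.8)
The goal is to construct, for given $\varepsilon>0$ and $a,b,k\ge 2$, an $a$-uniform hypergraph $H$ of girth $>k$ in which every $b$-colorable subhypergraph has at most $(1-b^{-(a-1)})(1+\varepsilon)\,e(H)$ edges.  The natural approach is the probabilistic deletion method of Erd\H os, adapted to hypergraphs.  First I would fix a vertex set of size $n$ and let $H_0$ be the random $a$-uniform hypergraph in which each of the $\binom{n}{a}$ possible hyperedges is included independently with probability $p=p(n)$, chosen so that $p n^{a}\to\infty$ but $p n^{a - 1 + 1/k}\to 0$ (so $p$ is a small negative power of $n$, slightly below the threshold $n^{-(a-1)-1/k}$).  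The two things to control are (i) the number of short Berge-cycles is small, and (ii) no sparse "unbalanced" coloring can save many more than the expected $(1-b^{-(a-1)})$ fraction of edges.

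**Few short Berge-cycles.**  A Berge-cycle of length $\ell$ with $2\le \ell\le k$ uses $\ell$ distinct hyperedges and $\ell$ defining vertices; the union of the hyperedges spans at most $\ell(a-1)$ vertices.  Hence the expected number of Berge-cycles of length at most $k$ is bounded, up to constants depending on $a,k$, by $\sum_{\ell=2}^{k} n^{\ell(a-1)} p^{\ell} = \sum_\ell (p n^{a-1})^\ell$, which is $o(p\binom{n}{a})=o(\E[e(H_0)])$ precisely because $p n^{a-1} = o(n^{-1/k})\to 0$ while $p n^a\to\infty$.  (One has to be a little careful that "spans at most $\ell(a-1)$ vertices" is the worst case; a cleaner bound counts, for each $\ell$ and each number $t$ of vertices in the union, the configurations, but the dominant term is the one just described.)  By Markov, with probability $\to 1$ the number of such cycles is at most, say, $\tfrac{\varepsilon}{4}\,e(H_0)$, and by concentration $e(H_0)\ge \tfrac12 p\binom{n}{a}$ whp.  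Deleting one hyperedge from each short Berge-cycle yields a hypergraph $H$ with girth $>k$ and $e(H)\ge (1-\tfrac{\varepsilon}{4})\,e(H_0)$.

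**Controlling all colorings simultaneously.**  The heart of the argument is: whp, for \emph{every} $b$-coloring $\chi$ of the vertex set, the number of non-monochromatic hyperedges of $H_0$ is at most $(1-b^{-(a-1)})(1+\tfrac{\varepsilon}{4})\,e(H_0)$.  Fix a coloring $\chi$ with color-class sizes $n_1,\dots,n_b$; the number $N_\chi$ of potential non-monochromatic hyperedges is $\binom{n}{a} - \sum_i \binom{n_i}{a}$, which is maximized when the classes are balanced, giving $N_\chi \le (1-b^{-(a-1)}+o(1))\binom{n}{a}$.  The number of non-monochromatic hyperedges actually present is $\mathrm{Bin}(N_\chi,p)$, with mean $pN_\chi$; a Chernoff bound gives deviation probability $\le \exp(-c\,\varepsilon^2 p N_\chi) = \exp(-\Theta(\varepsilon^2 p n^a))$.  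Since there are only $b^n$ colorings and $pn^a$ grows polynomially in $n$ (indeed $pn^a = n^{\,1 - 1/k - o(1)}\cdot$const $\gg n$), a union bound over all $b^n$ colorings still tends to $0$ — provided $p n^a / n \to \infty$, i.e. $p n^{a-1}\to\infty$.  \textbf{This is the main obstacle, and it forces a tension with step (ii):} the cycle-count needs $p n^{a-1}\to 0$ while the union bound over colorings wants $p n^{a-1}\to\infty$.  The resolution — and the key idea I would borrow from Erd\H os's original proof — is \emph{not} to union-bound over all colorings directly, but to use the following weaker but sufficient statement: it suffices that whp $e(H_0)$ is close to its mean \emph{and} that the number of monochromatic edges, in the worst coloring, is at least $(b^{-(a-1)} - \varepsilon')\,e(H_0)$ for a suitable $\varepsilon'$.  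For the latter one does not need all colorings: one shows that for any coloring, the expected number of monochromatic edges is $\ge \min_i \binom{n_i}{a}p \cdot b \ge$ (essentially) $b^{-(a-1)}pn^a$ only in the balanced case, and that unbalanced colorings have \emph{fewer} non-mono edges for a deterministic reason, so they cannot be the worst case; then one union-bounds Chernoff only over the "nearly balanced" colorings, but since every coloring near-balanced or not satisfies the deterministic upper bound $N_\chi\le(1-b^{-(a-1)}+o(1))\binom na$ on \emph{potential} non-mono edges, the probabilistic upper bound $\mathrm{Bin}(N_\chi,p)\le pN_\chi(1+\varepsilon/8)$ needs to hold only for the single random variable $e(H_0)$ together with a one-sided large-deviation bound that is uniform in $N_\chi\le\binom na$ — and $\mathrm{Bin}(\binom na, p)$ is concentrated.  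Concretely: the number of non-monochromatic edges in any coloring is at most the number present among the $N_\chi\le(1-b^{-(a-1)}+o(1))\binom na$ potential ones, which is at most the number present among \emph{some} fixed set of that many potential edges only in expectation, so instead one bounds it by $e(H_0)$ minus (monochromatic edges), and shows monochromatic edges are whp $\ge (b^{-(a-1)} - o(1) - \varepsilon/8)e(H_0)$ for the \emph{balanced} partition and that this is the minimum over all partitions by a convexity/deletion argument at the level of the random hypergraph itself.

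Putting it together: choose $p$ just below the cycle threshold, take $n$ large, and apply the three whp statements (concentration of $e(H_0)$, $o(\varepsilon e(H_0))$ short Berge-cycles, and the worst-coloring bound).  Intersecting these events, pick one realization $H_0$, delete one edge per short Berge-cycle to get $H$; then $H$ has girth $>k$ and, since deletions only help the adversary by at most an $\tfrac{\varepsilon}{4}$ fraction, every $b$-colorable subhypergraph of $H$ has at most $(1-b^{-(a-1)})(1+\varepsilon)e(H)$ hyperedges, as required.  I expect the write-up to lean on whichever of the general lemmas of Section~\ref{sec:Hypergraph-lemmas} already packages the "every fixed coloring behaves like a random one" phenomenon, so in practice the proof reduces to: instantiate that lemma with the uniform ($1/b,\dots,1/b$) weighting, observe the random-coloring bound of Proposition~\ref{prop:random_coloring} is the relevant optimum, and then do the short-cycle deletion.
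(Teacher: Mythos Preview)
Your overall plan --- random $a$-uniform hypergraph, concentration plus a union bound over all $b^n$ colorings, the convexity/power-mean fact that the balanced coloring maximizes the non-monochromatic fraction, then deletion of one edge per short Berge-cycle --- is exactly the paper's approach (Lemmas~\ref{lem:randomlike-unordered} and~\ref{lem:few short cycles}, combined in Proposition~\ref{prop:combined}, then the power-mean inequality). Your final paragraph describes the paper's proof accurately.

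The ``main obstacle'' you spend a paragraph on, however, is not real: it is an artifact of choosing $p$ too small. You set $p$ near $n^{-(a-1)-1/k}$, forcing $pn^{a-1}\to 0$; that makes $m:=p\binom{n}{a}\sim n^{1-1/k}=o(n)$, which of course cannot beat $b^n$ colorings. But you do not need $pn^{a-1}\to 0$ to control short cycles --- you only need the number of short Berge-cycles to be $o(m)$. Take instead $n\ll m\ll n^{1+1/k}$ (equivalently $1\ll pn^{a-1}\ll n^{1/k}$). Then your own estimate gives
\[
\sum_{\ell=2}^{k}(pn^{a-1})^{\ell}\;=\;O\!\left((m/n)^{k}\right)\;=\;o(n)\;=\;o(m),
\]
so deletion costs a negligible fraction; and the Chernoff/hypergeometric tail $\exp(-\Theta(\varepsilon^{2}m))$ beats the $b^{n}$ colorings because $m/n\to\infty$. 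The paper does precisely this direct union bound (see the last displayed line in the proof of Lemma~\ref{lem:randomlike-unordered}). There is no tension between the two requirements, and your ``resolution'' paragraph --- restricting to near-balanced colorings, arguing separately about monochromatic counts --- is unnecessary and, as written, does not actually close the gap it is trying to close. Simply fix the parameter range and the straightforward argument goes through.
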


Clearly, letting $a=b=2$ in the above theorem, we get Theorem \ref{thm:Erdos}.
The hypergraph lemmas in Section \ref{sec:Hypergraph-lemmas} can
be used to prove statements analogous to Theorem \ref{thm:high girth small p}
with different notions of colorability. As an example application,
we will prove the analogous Proposition \ref{prop:rainbow} about
strong (or rainbow) colorable subhypergraphs. More generally, a graph
$G$ is called $H$-colorable (where $H$ is a fixed graph) if there
is a homomorphism $G\rightarrow H$. Our Lemma \ref{lem:subhypergraph}
can be said to generalize the notion of $H$-coloring to hypergraphs,
and allow for proving statements similar to Theorem \ref{thm:high girth small p}
for $H$-colorability or analogous hypergraph conditions.

In Section \ref{sec:pastings}, we answer a question of Kühn and Osthus
in \cite{Kuhn:2005}. A graph is said to be \emph{pasted together}
from $C_{2l}$'s if it can be obtained from a $C_{2l}$ by successively
adding new $C_{2l}$'s which have at least one edge in common with
the previous ones.
\begin{question}[Kühn, Osthus \cite{Kuhn:2005}]
\label{question} Given integers $k>l\ge2$, does there always exist
a number $d=d(k)$ such that every $C_{2k}$-free graph which is pasted
together from $C_{2l}$'s has average degree at most $d$? 
\end{question}

Kühn and Osthus show in \cite{Kuhn:2005} that an affirmative answer
to the above question, even when restricted to bipartite graphs, would
imply that any $C_{2k}$-free graph $G$ contains a $C_{2l}$-free
subgraph containing a constant fraction of the edges of $G$. They
gave a positive answer to the question when $l=2$ and the graph is
bipartite: they showed that if $k\ge3$ is an integer and $G$ is
a bipartite $C_{2k}$-free graph which is obtained by pasting together
$C_{4}$'s, then the average degree of $G$ is at most $16k$.

We answer Question \ref{question} negatively by showing two different
pastings of $C_{6}$'s to form a $C_{8}$-free graph with high average
degree. These two examples show (in two very different ways) that
many $C_{6}$'s can be packed into a graph while still keeping it
$C_{8}$-free. We will show that the first example can be easily generalized
to any pair $k,l$ with $k>l\ge3$, showing that $l=2$ is the only
case when any $C_{2k}$-free graph obtained by pasting together $C_{2l}$'s
has average degree bounded by a constant $d=d(k)$.

\emph{Our paper is organized as follows:} In Section \ref{sec:simpleproof},
we give a short proof of Theorem \ref{thm:Kuhn_Osthus}. In Section
\ref{sec:Hypergraph-lemmas}, we prove a series of hypergraph lemmas
and Theorem \ref{thm:high girth small p}. Our proofs in Section \ref{sec:Hypergraph-lemmas}
use counting arguments and probabilistic ideas very similar to Erd\H os's
proof. In Section \ref{sec:Proof-of-MAINTheorem} we prove Theorem
\ref{thm:max_c} and Theorem \ref{thm:C_4-only}. In Section \ref{sec:pastings},
we give two examples of pasting together $C_{6}$'s to form a $C_{8}$-free
graph with high average degree, answering Question \ref{question}.

\section{A simple proof \label{sec:simpleproof} of a theorem of K\" uhn
and Osthus (Theorem~\ref{thm:Kuhn_Osthus})}
\begin{proof}[Proof of Theorem \ref{thm:Kuhn_Osthus}]
Let $G$ be a $C_{2k}$-free bipartite graph with color classes $A:=\{a_{1},\allowbreak a_{2},\ldots,a_{m}\}$
and $B:=\{b_{1},b_{2},\ldots,b_{n}\}$ for some $m,n\ge1$. Order
the vertices in $A$ and $B$ as $a_{1}<a_{2}<\ldots<a_{m}$ and $b_{1}<b_{2}<\ldots<b_{n}$
respectively. An edge $ab\in E(G)$ with $a\in A$ and $b\in B$ is
denoted by the ordered pair $(a,b)$.

We define a partial order $\mathcal{{P}}=(E(G),\le_{p})$ on the edge
set of $G$ as follows. For any two edges $(a,b),(a',b')\in E(G)$,
we say that $(a,b)\le_{p}(a',b')$ if and only if there exists an
integer $r\ge1$ and edges $(p_{i},q_{i})\in E(G)$, $i=1,2,\ldots,r$
such that $a=p_{1},b=q_{1}$ and $a'=p_{r},b'=q_{r}$, and the following
conditions hold: $p_{i}<p_{i+1}$ and $q_{i}<q_{i+1}$ and the vertices
$p_{i},q_{i},p_{i+1},q_{i+1}$ induce a $C_{4}$ for all $1\le i\le r-1$.

It is easy to see that %
\begin{comment}
Made a figure 
\end{comment}
if there is a chain of length $k$ in $\mathcal{P}$ then $G$ contains
a cycle of length $2k$, a contradiction (see Figure \ref{chain}).
So the length of a longest chain in $\mathcal{P}$ is at most $k-1$
which implies that the size of a largest antichain in $\mathcal{P}$
is at least $\frac{1}{k-1}\abs{E(G)}$ by Mirsky's theorem \cite{mirsky1971dual}.
Since $G$ is bipartite, any $C_{4}$ in $G$ contains two edges $(p_{1},q_{1})$,
$(p_{2},q_{2})\in E(G)$ such that $(p_{1},q_{1})<_{p}(p_{2},q_{2})$,
so the subgraph $H$ of $G$ consisting of the edges in this largest
antichain is $C_{4}$-free, completing the proof of the theorem. 
\end{proof}
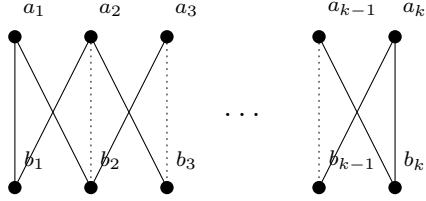
\begin{figure}
\begin{centering}
\begin{comment}
Position of the dots fixed manually 
\end{comment}
\begin{tikzpicture}[line cap=round,line join=round,>=triangle 45,x=1.0cm,y=1.0cm]
\begin{comment}
\clip(-4.3,-3.66) rectangle (9.16,6.3);
\end{comment}
\draw [dotted] (-1.6,2.06)-- (-1.6,0.06);
\draw (-2.6,2.06)-- (-2.6,0.06);
\draw [dotted] (-0.6,2.06)-- (-0.6,0.06);
\draw [dotted] (1.4,2.06)-- (1.4,0.06);
\draw (2.4,2.06)-- (2.4,0.06);
\draw (-2.6,0.06)-- (-1.6,2.06);
\draw (-2.6,2.06)-- (-1.6,0.06);
\draw (-1.6,0.06)-- (-0.6,2.06);
\draw (-1.6,2.06)-- (-0.6,0.06);
\draw (1.4,0.06)-- (2.4,2.06);
\draw (1.4,2.06)-- (2.4,0.06);
\draw (0,1.2) node[anchor=north west] {$\ldots$};
\begin{scriptsize}
\fill [color=black] (-2.6,2.06) circle (2.5pt);
\draw[color=black] (-2.34,2.42) node {$a_1$};
\fill [color=black] (-1.6,2.06) circle (2.5pt);
\draw[color=black] (-1.34,2.42) node {$a_2$};
\fill [color=black] (-0.6,2.06) circle (2.5pt);
\draw[color=black] (-0.34,2.42) node {$a_3$};
\fill [color=black] (1.4,2.06) circle (2.5pt);
\draw[color=black] (1.84,2.42) node {$a_{k-1}$};
\fill [color=black] (2.4,2.06) circle (2.5pt);
\draw[color=black] (2.66,2.42) node {$a_k$};
\fill [color=black] (-2.6,0.06) circle (2.5pt);
\draw[color=black] (-2.34,0.42) node {$b_1$};
\fill [color=black] (-1.6,0.06) circle (2.5pt);
\draw[color=black] (-1.34,0.42) node {$b_2$};
\fill [color=black] (-0.6,0.06) circle (2.5pt);
\draw[color=black] (-0.34,0.42) node {$b_3$};
\fill [color=black] (1.4,0.06) circle (2.5pt);
\draw[color=black] (1.84,0.42) node {$b_{k-1}$};
\fill [color=black] (2.4,0.06) circle (2.5pt);
\draw[color=black] (2.66,0.42) node {$b_k$};
\end{scriptsize}
\end{tikzpicture}
\par\end{centering}
\caption{The solid edges form a $C_{2k}$}
\label{chain}
\end{figure}

\section{Hypergraph lemmas\label{sec:Hypergraph-lemmas} and proof of Theorem
\ref{thm:high girth small p}}

Let $\Hanm$ denote the family of all $a$-uniform hypergraphs with
$n$ vertices and $m$ hyperedges for some $a\geq2$. $\left|\Hanm\right|={{n \choose a} \choose m}$.
Given a coloring $C:[n]\rightarrow[b]$ of the vertex set $[n]$ with
$b$ colors (with $b\geq2$), let $n_{j}^{C}$ be the number of vertices
of color $j$. The multiset of the colors of the vertices of a hyperedge
$e$ (with the multiplicity with which they occur in $e$) is called
the \emph{color multiset of $e$ (with respect to $C$)}. For an $a$-element
multiset of colors $T$, let $p^{C}(T)$ be the probability that the
color multiset of a random hyperedge of the complete $a$-uniform
hypergraph on $n$ vertices, with the coloring $C$, is $T$. (Note
that in this paper, when we mention a coloring, we mean an arbitrary
coloring of the vertex set, not necessarily a proper coloring of a
hypergraph, unless indicated.)
\begin{prop}
\label{prop:p-asymptotic}For $n\rightarrow\infty$, asymptotically
\[
p^{C}(T)=\frac{\prod_{j=1}^{b}\binom{n_{j}^{C}}{I_{T}(j)}}{\binom{n}{a}}\sim\frac{\prod_{j=1}^{b}\left(n_{j}^{C}\right)^{I_{T}(j)}}{n^{a}}\cdot\frac{a!}{\prod_{j=1}^{b}I_{T}(j)!}
\]
 where $I_{T}(j)$ denotes the multiplicity of $j$ in the multiset
$T$.
\end{prop}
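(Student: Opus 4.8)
The plan is to first establish the exact combinatorial identity (the first equality), and then read off the asymptotics from the elementary estimate $\binom{N}{r}\sim N^{r}/r!$ for fixed $r$ as $N\to\infty$.

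\textbf{Exact formula.} Since $T$ is an $a$-element multiset, we have $\sum_{j=1}^{b}I_{T}(j)=a$. A random hyperedge of the complete $a$-uniform hypergraph on $[n]$ is a uniformly chosen $a$-subset $e\subseteq[n]$, and there are $\binom{n}{a}$ of them. Such a subset $e$ has color multiset $T$ with respect to $C$ precisely when, for every color $j$, exactly $I_{T}(j)$ of the vertices of $e$ have color $j$. To count these, choose, for each $j$ independently, which $I_{T}(j)$ of the $n_{j}^{C}$ vertices of color $j$ lie in $e$; this produces exactly $\prod_{j=1}^{b}\binom{n_{j}^{C}}{I_{T}(j)}$ hyperedges with color multiset $T$. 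Dividing by $\binom{n}{a}$ gives the first equality.

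\textbf{Asymptotics.} I would then apply the fact that for a fixed nonnegative integer $r$, $\binom{N}{r}=\frac{N(N-1)\cdots(N-r+1)}{r!}\sim\frac{N^{r}}{r!}$ as $N\to\infty$ (the case $r=0$ being the trivial identity $\binom{N}{0}=1=N^{0}/0!$). Applying this with $N=n_{j}^{C}$, $r=I_{T}(j)$ for each $j$, and with $N=n$, $r=a$ in the denominator, yields
\[
p^{C}(T)=\frac{\prod_{j=1}^{b}\binom{n_{j}^{C}}{I_{T}(j)}}{\binom{n}{a}}\sim\frac{\prod_{j=1}^{b}\bigl((n_{j}^{C})^{I_{T}(j)}/I_{T}(j)!\bigr)}{n^{a}/a!}=\frac{a!}{\prod_{j=1}^{b}I_{T}(j)!}\cdot\frac{\prod_{j=1}^{b}(n_{j}^{C})^{I_{T}(j)}}{n^{a}},
\]
as claimed.

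\textbf{The main point to pin down} is the regime in which the limit is taken: the estimate $\binom{n_{j}^{C}}{I_{T}(j)}\sim(n_{j}^{C})^{I_{T}(j)}/I_{T}(j)!$ is only meaningful when $n_{j}^{C}\to\infty$ for every color $j$ with $I_{T}(j)\ge1$. In the intended applications in Section \ref{sec:Hypergraph-lemmas} the relevant colorings have all color classes of size $\Theta(n)$, so this is automatic; accordingly I would state (or read) the proposition under the standing hypothesis that $n_{j}^{C}\to\infty$ for each $j$ (or that the proportions $n_{j}^{C}/n$ are bounded away from $0$), which is all that later arguments use. Beyond this bookkeeping there is no real obstacle: the proof is a direct counting computation followed by a one-line application of a standard asymptotic for binomial coefficients.
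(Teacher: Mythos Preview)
The paper states this proposition without proof, treating it as an elementary observation; your argument supplies exactly the computation the authors omit, and it is correct. Your remark about the regime---that $\binom{n_j^C}{I_T(j)}\sim (n_j^C)^{I_T(j)}/I_T(j)!$ requires $n_j^C\to\infty$ for each $j$ with $I_T(j)\ge 1$---is a genuine point the paper glosses over, and your diagnosis of how the later applications handle it (color classes of order $\Theta(n)$, or the asymptotic being used only as an inequality up to a small additive error) is accurate.
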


We will also use the following tail bound on the binomial and the
hypergeometric distributions. Hoeffding proves this bound in a more
general setting, see Section 2 in \cite{hypergeometric_tail} for
the binomial distribution and Section 6 for the hypergeometric distribution.
If a random variable $X$ has binomial distribution with $m$ trials
and success probability $p$, we write $X\sim\textnormal{Binomial}(m,p)$.
If $X$ has hypergeometric distribution with a population of size
$N$ containing $pN$ successes, and with $m$ draws, we write $X\sim\textnormal{Hypergeometric}(pN,N,m)$.
\begin{prop}
\label{prop:tail}Let $m,N\in\naturals$ and $p,\varepsilon\in[0,1]$,
and let $X$ be a random variable with $X\sim\textnormal{Binomial}(m,p)$
or $X\sim\textnormal{Hypergeometric}(pN,N,m)$. Then 
\[
P(\left|X-pm\right|>\varepsilon m)\leq2e^{-2\varepsilon^{2}m}.
\]
\end{prop}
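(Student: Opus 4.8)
The statement to prove is Theorem~\ref{thm:high girth small p}: for any $\varepsilon>0$ and integers $a,b,k\geq 2$, there is an $a$-uniform hypergraph $H$ of girth more than $k$ in which every $b$-colorable subhypergraph has at most $\left(1-\frac{1}{b^{a-1}}\right)(1+\varepsilon)e(H)$ hyperedges. Let me sketch how I'd prove it.

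\textbf{The plan.} I would treat the binomial and the hypergeometric cases together, via the exponential-moment (Chernoff) method, reducing the hypergeometric case to the binomial one by Hoeffding's comparison between sampling with and without replacement; this is in essence Hoeffding's own argument for the bound cited above.

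\textbf{The binomial case.} Write $X=\sum_{i=1}^{m}Y_{i}$ with the $Y_{i}$ independent $\textnormal{Bernoulli}(p)$, so that $\E X=pm$. For $\lambda>0$, applying Markov's inequality to $e^{\lambda(X-pm)}$ gives
\[
P(X-pm\ge\varepsilon m)\le e^{-\lambda\varepsilon m}\prod_{i=1}^{m}\E e^{\lambda(Y_{i}-p)}.
\]
Each $Y_{i}-p$ lies in an interval of length $1$, so Hoeffding's lemma bounds every factor by $e^{\lambda^{2}/8}$, and the right side is at most $e^{-\lambda\varepsilon m+\lambda^{2}m/8}$. Taking $\lambda=4\varepsilon$ (the minimiser of the exponent) yields $P(X-pm\ge\varepsilon m)\le e^{-2\varepsilon^{2}m}$; the same bound for $P(X-pm\le-\varepsilon m)$ follows by repeating the argument with $1-Y_{i}$ in place of $Y_{i}$, and summing the two one-sided estimates gives $P(\abs{X-pm}>\varepsilon m)\le2e^{-2\varepsilon^{2}m}$.

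\textbf{The hypergeometric case.} Here I would model $X$ as the sum of the first $m$ terms of a uniformly random permutation of a fixed population of $N$ values, $pN$ of which equal $1$ and the rest $0$; again $\E X=pm$. The one new ingredient is Hoeffding's comparison inequality: for every convex function $\phi$, $\E\phi(X)\le\E\phi(\tilde X)$, where $\tilde X\sim\textnormal{Binomial}(m,p)$ is the corresponding with-replacement sum. Applying this to $\phi(x)=e^{\lambda(x-pm)}$ and to $\phi(x)=e^{-\lambda(x-pm)}$ shows that the exponential moments of $X$ about its mean are no larger than those of $\tilde X$, so the Chernoff computation of the previous paragraph applies verbatim and yields $P(\abs{X-pm}>\varepsilon m)\le2e^{-2\varepsilon^{2}m}$.

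\textbf{The main obstacle.} Everything except the comparison inequality $\E\phi(X)\le\E\phi(\tilde X)$ is the routine one-parameter Chernoff optimisation, where the choice $\lambda=4\varepsilon$ is exactly what converts the crude exponent $-\lambda\varepsilon m+\lambda^{2}m/8$ into $-2\varepsilon^{2}m$. The comparison is the part that needs care: one proves it either by Hoeffding's device of writing a without-replacement partial sum as an average, over all ways of partitioning the drawn indices, of sums of independent blocks and then invoking Jensen, or by a short martingale/coupling argument; I would simply cite Hoeffding for it, as the authors do. (The stated hypotheses $p,\varepsilon\in[0,1]$ and $m,N\in\naturals$ only serve to make the statement meaningful; the displayed inequality actually holds for every $\varepsilon\ge0$.)
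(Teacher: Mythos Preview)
Your opening sentence misidentifies the statement: you write ``The statement to prove is Theorem~\ref{thm:high girth small p}'' and quote the hypergraph result, but the statement actually under consideration is Proposition~\ref{prop:tail}, the two-sided Hoeffding tail bound for the binomial and hypergeometric distributions. Fortunately, everything after your first paragraph is in fact a proof sketch for Proposition~\ref{prop:tail}, not for Theorem~\ref{thm:high girth small p}, so the mismatch is only in the label.

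As for the comparison: the paper does not give its own proof of Proposition~\ref{prop:tail} at all. It simply states the bound and cites Hoeffding's paper \cite{hypergeometric_tail}, pointing to Section~2 for the binomial case and Section~6 for the hypergeometric case. Your sketch---the Chernoff exponential-moment bound via Hoeffding's lemma for the binomial case, together with Hoeffding's convexity comparison $\E\phi(X)\le\E\phi(\tilde X)$ to transfer the bound to the hypergeometric case---is precisely the content of those sections of Hoeffding's paper. So your proposal is correct and is exactly the argument the paper is citing; you have just written out what the paper leaves to the reference.
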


\begin{lem}
\label{lem:randomlike-unordered}Let $n\rightarrow\infty$ and $\frac{m}{n}\rightarrow\infty$.
For any fixed $\varepsilon>0$, for every hypergraph $H$ in $\Hanm$,
with the exception of $o\!\left({{n \choose a} \choose m}\right)$
hypergraphs, the following holds: \newsavebox{\unorderedstatement}\savebox{\unorderedstatement}{\parbox{0.9\textwidth}{For
any coloring $C$ of the vertex set $[n]$ with $b$ colors, and any
$a$-element multiset of colors $T$, the number of hyperedges of
$H$ whose color multiset is $T$ is $\left(p^{C}(T)\pm\varepsilon\right)m$.}}
\begin{equation}
\usebox{\unorderedstatement}\label{eq:unordered-statement}
\end{equation}

(Note: In this paper, whenever we write $X=Y\pm\varepsilon$, we mean
$X\in[Y-\varepsilon,Y+\varepsilon]$.)
\end{lem}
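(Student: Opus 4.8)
The plan is a first-moment argument: I will show that a uniformly random $H\in\Hanm$ satisfies the statement (\ref{eq:unordered-statement}) with probability $1-o(1)$, via a union bound over all pairs $(C,T)$ combined with the tail estimate of Proposition \ref{prop:tail}. Since all hypergraphs in $\Hanm$ are equally likely, this is exactly the same as saying that all but $o\!\bigl(\binom{\binom{n}{a}}{m}\bigr)$ of them satisfy it.

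First I would fix a coloring $C\colon[n]\to[b]$ and an $a$-element multiset $T$ of colors. Sampling $H$ uniformly from $\Hanm$ is the same as sampling an $m$-element subset of the set of all $N:=\binom{n}{a}$ possible hyperedges uniformly at random. By the very definition of $p^{C}(T)$, exactly $p^{C}(T)N$ of these $N$ hyperedges have color multiset $T$ (an integer, since it counts $a$-subsets of $[n]$). Hence the number $X$ of hyperedges of $H$ with color multiset $T$ has distribution $\textnormal{Hypergeometric}(p^{C}(T)N,N,m)$, so $\E X=p^{C}(T)m$, and Proposition \ref{prop:tail} gives
\[
P\bigl(\abs{X-p^{C}(T)m}>\varepsilon m\bigr)\le 2e^{-2\varepsilon^{2}m}.
\]
Now comes the union bound. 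There are at most $b^{n}$ colorings $C\colon[n]\to[b]$, and the number of $a$-element multisets $T$ of colors is $\binom{a+b-1}{b-1}$, a constant not depending on $n$ or $m$. So the probability that a uniformly random $H\in\Hanm$ fails (\ref{eq:unordered-statement}) for \emph{some} pair $(C,T)$ is at most
\[
\binom{a+b-1}{b-1}\,b^{n}\cdot 2e^{-2\varepsilon^{2}m}=2\binom{a+b-1}{b-1}\exp\!\bigl(n\ln b-2\varepsilon^{2}m\bigr).
\]
Since $\varepsilon$ is fixed and $\frac{m}{n}\to\infty$, we have $2\varepsilon^{2}m-n\ln b\to\infty$, so this probability tends to $0$; multiplying by $\abs{\Hanm}=\binom{N}{m}$ gives the claim.

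There is no real obstacle here — this is essentially the hypergraph analogue of Erd\H os's counting argument for Theorem \ref{thm:Erdos}. The only points needing a word of care are bookkeeping: that the number of $a$-element color multisets is an absolute constant $\binom{a+b-1}{b-1}$, so the union bound is governed by the $b^{n}$ choices of $C$; that Proposition \ref{prop:tail} is uniform in the success probability $p^{C}(T)$, so very small (or zero, where the statement is trivial) values of $p^{C}(T)$ cause no trouble; and that the hypothesis $\frac{m}{n}\to\infty$ is precisely what is needed to overcome the entropy term $n\ln b$ coming from the choice of coloring and make $b^{n}e^{-2\varepsilon^{2}m}\to 0$.
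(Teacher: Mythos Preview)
Your proof is correct and is essentially identical to the paper's: fix $(C,T)$, observe that the count of hyperedges with color multiset $T$ is hypergeometric, apply the Hoeffding tail bound of Proposition~\ref{prop:tail}, and then take a union bound over the $b^{n}$ colorings and the constant $\binom{a+b-1}{a}=\binom{a+b-1}{b-1}$ multisets, using $m/n\to\infty$ to beat the $b^{n}$ factor. The only cosmetic difference is that the paper phrases the argument as counting bad hypergraphs rather than computing a failure probability.
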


\begin{proof}
Let $u$ be the number of hypergraphs in $\Hanm$ for which \eqref{eq:unordered-statement}
does not hold. Corresponding to each such hypergraph $H$ there is
at least one $b$-coloring $C$ of its vertices, and a multiset of
colors $T$, such that \eqref{eq:unordered-statement} does not hold
for $C$ and $T$. Therefore 
\[
u\leq\left|\left\{ \left(H,C,T\right):H\in\Hanm\textnormal{, \eqref{eq:unordered-statement} does not hold for \ensuremath{H}, \ensuremath{C} and \ensuremath{T}}\right\} \right|.
\]

The number of $b$-colorings of $n$ vertices with $b$ fixed colors
is $\left|\C\right|=b^{n}$. The number of multisets of $a$ elements
of $b$ colors is $\binom{a+b-1}{a}$. Therefore 
\[
u\leq b^{n}\binom{a+b-1}{a}\max_{\substack{\textnormal{coloring }C\\
\textnormal{multiset of colors }T
}
}\left|\left\{ \begin{gathered}H\in\Hanm:\\
\textnormal{\eqref{eq:unordered-statement} does not hold for \ensuremath{H}, \ensuremath{C} and }T
\end{gathered}
\right\} \right|.
\]

Fix a $b$-coloring $C$ and a multiset of colors $T$. A hypergraph
$H\in\Hanm$ consists of $m$ hyperedges, out of $\binom{n}{a}$ possibilities.
Out of all possible hyperedges, $p^{C}(T)\binom{n}{a}$ have $T$
as their color multiset. So $\left|\left\{ e\in H:c^{C}(e)=T\right\} \right|\sim\textnormal{Hypergeometric}\!\left(p^{C}(T)\binom{n}{a},\binom{n}{a},m\right)$.
\eqref{eq:unordered-statement} fails to hold for $H$, $C$ and $T$
if 
\[
\Bigl|\left|\left\{ e\in H:c^{C}(e)=T\right\} \right|-p^{C}(T)m\Bigr|>\varepsilon m.
\]
 By the tail bound for the hypergeometric distribution in Proposition
\ref{prop:tail}, the number of hypergraphs $H\in\Hanm$ for which
this holds is at most 
\[
{{n \choose a} \choose m}\cdot2e^{-2\varepsilon^{2}m},\textnormal{ so}
\]
\[
u\leq2b^{n}\binom{a+b-1}{a}e^{-2\varepsilon^{2}m}{{n \choose a} \choose m}=o\!\left({{n \choose a} \choose m}\right)
\]
 as $\frac{m}{n}\rightarrow\infty$. 
\end{proof}
The following lemma is a corollary of Lemma \ref{lem:randomlike-unordered}.
\begin{lem}
\label{lem:subhypergraph}Let $\T$ be a family of multisets of $a$
elements which are in $[b]$. Let $n\rightarrow\infty$ and $\frac{m}{n}\rightarrow\infty$.
For a $b$-coloring of $n$ vertices $C$, let $p^{C}(\T)=\sum_{T\in\T}p^{C}(T)$
(that is, the probability that the color multiset of a random hyperedge
of the complete $a$-uniform hypergraph is in $\T$); and let $C_{M}$
be a $b$-coloring for which $p^{C}(\T)$ takes its maximum. For a
hypergraph $H\in\Hanm$, let $q(H)$ be the number of hyperedges in
the biggest subhypergraph of $H$ which is colorable in such a way
that the color multiset of every hyperedge of $H$ is in $\T$. For
any fixed $\varepsilon>0$, for every hypergraph $H$ in $\Hanm$,
with the exception of $o\!\left({{n \choose a} \choose m}\right)$
hypergraphs, 
\begin{equation}
q(H)\leq p^{C_{M}}(\T)m\left(1+\varepsilon\right).\label{eq:subhypergraph-statement}
\end{equation}
\end{lem}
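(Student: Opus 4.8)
The plan is to deduce this directly from Lemma~\ref{lem:randomlike-unordered}. The key observation is that ``$H$ has a subhypergraph of size $q$ colorable so that every hyperedge of that subhypergraph has color multiset in $\T$'' is witnessed by a single coloring $C$ of the $n$ vertices together with the set of hyperedges of $H$ whose color multiset (with respect to $C$) lies in $\T$. So $q(H)$ is exactly the maximum, over all $b$-colorings $C$ of $[n]$, of the number of hyperedges of $H$ whose color multiset with respect to $C$ is in $\T$.

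First I would fix the exceptional set to be the $o\!\left({{n \choose a} \choose m}\right)$ hypergraphs excluded by Lemma~\ref{lem:randomlike-unordered}, applied with the given $\varepsilon$ (or with $\varepsilon/|\T|$ if one wants to be careful with the summation below — since $|\T|\le\binom{a+b-1}{a}$ is a fixed constant, this only changes the exceptional set by a constant factor and it is still $o\!\left({{n \choose a} \choose m}\right)$; I would just use $\varepsilon' = \varepsilon/|\T|$). For any $H$ outside this exceptional set, and for \emph{any} $b$-coloring $C$, summing the conclusion of \eqref{eq:unordered-statement} over all $T\in\T$ gives that the number of hyperedges of $H$ with color multiset (w.r.t.\ $C$) in $\T$ is at most $\sum_{T\in\T}\left(p^{C}(T)+\varepsilon'\right)m = \left(p^{C}(\T) + |\T|\varepsilon'\right)m = \left(p^{C}(\T)+\varepsilon\right)m$. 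By definition of $C_M$, $p^{C}(\T)\le p^{C_M}(\T)$ for every $C$, so this quantity is at most $\left(p^{C_M}(\T)+\varepsilon\right)m$.

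Since this bound holds for every coloring $C$, taking the maximum over $C$ yields $q(H)\le \left(p^{C_M}(\T)+\varepsilon\right)m$. This is slightly weaker in form than \eqref{eq:subhypergraph-statement}, which is stated multiplicatively as $p^{C_M}(\T)m(1+\varepsilon)$; to match it exactly I would simply note $p^{C_M}(\T)+\varepsilon \le p^{C_M}(\T)(1+\varepsilon)$ whenever $p^{C_M}(\T)\ge 1$ is false in general, so instead I would run the argument from the start with an additive error $\varepsilon\,p^{C_M}(\T)$ in place of $\varepsilon$: apply Lemma~\ref{lem:randomlike-unordered} with error parameter $\varepsilon\,p^{C_M}(\T)/|\T|$, which is a fixed positive constant since $p^{C_M}(\T)>0$ for any nonempty $\T$ (it is bounded below by the probability of the all-one-color multiset appearing, if that is in $\T$, or more robustly one checks $p^{C_M}(\T)$ is a fixed positive rational depending only on $a,b,\T$ and the near-balanced coloring). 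That produces exactly $q(H)\le p^{C_M}(\T)m(1+\varepsilon)$.

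I do not anticipate a genuine obstacle here — the lemma is essentially a repackaging of Lemma~\ref{lem:randomlike-unordered} via the union bound over the constantly-many multisets in $\T$, plus the trivial remark that any colorable subhypergraph of the required type is determined by its coloring. The only point requiring a line of care is the bookkeeping that converts the additive error $\varepsilon$ in \eqref{eq:unordered-statement} into the multiplicative error $(1+\varepsilon)$ in \eqref{eq:subhypergraph-statement}, which is handled by rescaling $\varepsilon$ by the fixed constants $|\T|$ and $p^{C_M}(\T)$ before invoking the previous lemma.
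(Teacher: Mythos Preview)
Your proposal is correct and follows essentially the same approach as the paper: rewrite $q(H)$ as $\max_C|\{e\in H:c^C(e)\in\T\}|$, apply Lemma~\ref{lem:randomlike-unordered} with a rescaled $\varepsilon$, sum over $T\in\T$, and convert the additive error to a multiplicative one using a positive lower bound on $p^{C_M}(\T)$. The paper handles that last step slightly more cleanly by proving the explicit bound $p^{C_M}(\T)\ge|\T|/(2b^a)$ via the near-balanced coloring (your ``more robustly'' remark), which avoids the minor awkwardness that $p^{C_M}(\T)$ a priori depends on $n$.
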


\begin{proof}
If $\T=\emptyset$, then $q(H)=0$ for any $H$. From now we assume
that $\T\ne\emptyset$. We show that we may also assume that $p^{C_{M}}(\T)>\frac{\left|\T\right|}{2b^{a}}$
when $n$ is sufficiently large. Let $T\in\T$, and let $C_{E}$ be
a $b$-coloring in which every color class has a size $\approx\frac{n}{b}$.
Then, by Proposition~\ref{prop:p-asymptotic}, asymptotically 
\[
p^{C_{E}}(T)=\frac{a!}{b^{a}\prod_{j=1}^{b}I_{T}(j)!}\geq\frac{1}{b^{a}}.
\]
Since $p^{C_{M}}(\T)\geq p^{C_{E}}(\T)=\sum_{T\in\T}p^{C_{E}}(T)$,
for sufficiently large $n$, $p^{C_{M}}(\T)\geq\frac{\left|\T\right|}{2b^{a}}$.

An equivalent definition of the function $q$ is 
\[
q(H)=\max_{b\textnormal{-coloring }C}\left|\left\{ e\in H:c^{C}(e)\in\T\right\} \right|.
\]
 We use Lemma \ref{lem:randomlike-unordered} with $\frac{\varepsilon}{2b^{a}}$
in place of $\varepsilon$. For almost every hypergraph $H\in\Hanm$,
for every coloring $C$, 
\begin{multline*}
\left|\left\{ e\in H:c^{C}(e)\in\T\right\} \right|=\sum_{T\in\T}\left|\left\{ e\in H:c^{C}(e)=T\right\} \right|\leq\sum_{T\in\T}\left(p^{C}(T)+\frac{\varepsilon}{2b^{a}}\right)m\\
=\left(p^{C}(\T)+\frac{\varepsilon\left|\T\right|}{2b^{a}}\right)m\leq\left(p^{C_{M}}(\T)+\frac{\varepsilon\left|\T\right|}{2b^{a}}\right)m\leq p^{C_{M}}(\T)m\left(1+\varepsilon\right)
\end{multline*}
 using that $p^{C_{M}}(\T)\geq\frac{\left|\T\right|}{2b^{a}}$.
\end{proof}
We define an \emph{oriented hypergraph} as a set of ordered sequences
without repetition (called hyperedges) over a vertex set, such that
two hyperedges are not allowed to differ only in their order. (The
order of the vertices on different hyperedges is independent of each
other.) An oriented hypergraph is thus equivalent to a hypergraph
along with a total order on the vertices of each hyperedge. Let $\Oanm$
denote the family of all $a$-uniform oriented hypergraphs with $n$
vertices and $m$ hyperedges. (Note that other meanings of the term
``oriented hypergraph'' exist in the literature.)

Let $C:[n]\rightarrow[b]$ be a coloring of the vertex set $[n]$
with $b$ colors ($b\geq2$). We call the \emph{color sequence (with
respect to $C$)} of an $a$-tuple of vertices $e=(v_{1},\ldots,v_{a})$
the sequence $c^{C}(e)=\left(C(v_{1}),\ldots,C(v_{a})\right)$. If
we choose a random $a$-tuple of the vertex set $V$ without repetition,
the probability that its color sequence is a given sequence of colors
$s=(s_{1},\ldots,s_{a})$ is 
\[
\frac{1}{\binom{n}{a}a!}\prod_{j=1}^{b}\frac{n_{j}^{C}!}{\left(n_{j}^{C}-\left|\left\{ i\in[a]:s_{i}=j\right\} \right|\right)!}\sim\prod_{i=1}^{a}\frac{n_{c_{i}}^{C}}{n}
\]
 if $n\rightarrow\infty$.

The following lemma is a variant of Lemma \ref{lem:randomlike-unordered}
for oriented hypergraphs.
\begin{lem}
\label{lem:randomlike-ordered}Let $n\rightarrow\infty$ and $\frac{m}{n}\rightarrow\infty$.
For any fixed $\varepsilon>0$, for every oriented hypergraph $O$
in $\Oanm$, with the exception of $o\!\left(\left|\Oanm\right|\right)$
hypergraphs, the following holds: \newsavebox{\orderedstatement}\savebox{\orderedstatement}{\parbox{0.9\textwidth}{For
any coloring $C$ of the vertex set $[n]$ with $b$ colors, and any
$a$-tuple of colors $s$, the number of hyperedges of $O$ whose
color sequence is $s$ is $\left(\prod_{i=1}^{a}\frac{n_{s_{i}}^{C}}{n}\pm\varepsilon\right)\cdot m$.}}
\begin{equation}
\usebox{\orderedstatement}\label{eq:ordered-statement}
\end{equation}
\end{lem}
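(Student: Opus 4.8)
The plan is to mirror the proof of Lemma \ref{lem:randomlike-unordered} almost verbatim, replacing ordinary hypergraphs by oriented ones and color multisets by color sequences. First I would let $u$ be the number of oriented hypergraphs in $\Oanm$ for which \eqref{eq:ordered-statement} fails, and bound $u$ by the number of triples $(O,C,s)$ where $O\in\Oanm$, $C$ is a $b$-coloring of $[n]$, and $s$ is an $a$-tuple of colors for which the count of hyperedges of $O$ with color sequence $s$ deviates from $\left(\prod_{i=1}^a\frac{n_{s_i}^C}{n}\right)m$ by more than $\varepsilon m$. There are $b^n$ colorings and $b^a$ color sequences, so $u\le b^n b^a\max_{C,s}N(C,s)$, where $N(C,s)$ is the number of oriented hypergraphs witnessing the failure for that fixed pair.

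Next, fix a $b$-coloring $C$ and an $a$-tuple of colors $s$. An oriented hypergraph in $\Oanm$ is a choice of $m$ distinct hyperedges out of the $\binom{n}{a}a!$ possible ordered $a$-tuples without repetition; the number of those tuples whose color sequence equals $s$ is $p_s\binom{n}{a}a!$, where $p_s=\frac{1}{\binom{n}{a}a!}\prod_{j=1}^{b}\frac{n_j^C!}{(n_j^C-|\{i:s_i=j\}|)!}$ is exactly the probability computed in the excerpt (and $p_s\sim\prod_{i=1}^a\frac{n_{s_i}^C}{n}$). Hence the number of hyperedges of a uniformly random $O\in\Oanm$ with color sequence $s$ is distributed as $\textnormal{Hypergeometric}\!\left(p_s\binom{n}{a}a!,\binom{n}{a}a!,m\right)$. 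Applying the hypergeometric tail bound (Proposition \ref{prop:tail}) with population size $N=\binom{n}{a}a!$ and $p=p_s$, the number of $O$ for which $\bigl||\{e\in O:c^C(e)=s\}|-p_s m\bigr|>\tfrac{\varepsilon}{2}m$ is at most $\left|\Oanm\right|\cdot 2e^{-2(\varepsilon/2)^2 m}=\left|\Oanm\right|\cdot 2e^{-\varepsilon^2 m/2}$.

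I would then absorb the asymptotic discrepancy between $p_s$ and $\prod_{i=1}^a\frac{n_{s_i}^C}{n}$ into the other half of the $\varepsilon$: for $n$ large enough (uniformly over all $C$ and $s$, since there are only finitely many color sequences and the product of at most $a$ factors each in $[0,1]$ is controlled), $\bigl|p_s-\prod_{i=1}^a\frac{n_{s_i}^C}{n}\bigr|<\varepsilon/2$, so the event in \eqref{eq:ordered-statement} failing implies the hypergeometric deviation of at least $\varepsilon m/2$. Combining, $u\le b^{n}b^{a}\cdot\left|\Oanm\right|\cdot 2e^{-\varepsilon^2 m/2}$, and since $m/n\rightarrow\infty$ this is $o\!\left(\left|\Oanm\right|\right)$, as desired.

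The only point requiring a little care — the main obstacle, such as it is — is making the passage from the exact probability $p_s$ to its asymptotic value $\prod_{i=1}^a\frac{n_{s_i}^C}{n}$ uniform in $C$: one must check that the error is $o(1)$ regardless of how unbalanced the color classes are. This is routine, because $\frac{n_j^C!}{(n_j^C-t)!}=n_j^C(n_j^C-1)\cdots(n_j^C-t+1)$ with $t\le a$ fixed, so each factor is $n_j^C+O(1)$ and dividing by $n$ gives $\frac{n_{s_i}^C}{n}+O(1/n)$ with constants depending only on $a$; multiplying at most $a$ such factors keeps the total error $O(a/n)=o(1)$. Alternatively, one can simply cite Proposition \ref{prop:p-asymptotic} together with the displayed asymptotic just before the statement of this lemma, which already records $p_s\sim\prod_{i=1}^a\frac{n_{s_i}^C}{n}$.
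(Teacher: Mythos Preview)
Your argument has a genuine gap at the point where you assert that the number of hyperedges of a uniformly random $O\in\Oanm$ with color sequence $s$ is hypergeometric. This would be true if an element of $\Oanm$ were simply an $m$-subset of the $\binom{n}{a}a!$ ordered $a$-tuples, but it is not: by the paper's definition, two hyperedges of an oriented hypergraph are not allowed to differ only in their order. Equivalently, an oriented hypergraph is an ordinary hypergraph $H\in\Hanm$ together with an independent ordering of each of its $m$ edges, so $\left|\Oanm\right|=\binom{\binom{n}{a}}{m}(a!)^{m}$, not $\binom{\binom{n}{a}a!}{m}$. Concretely, if $s$ has a repeated color (say $s=(\red,\red,\ldots)$), then two ordered tuples on the same underlying vertex set can both have color sequence $s$, yet at most one of them can appear in any $O$; this makes the ``successes'' in your population not freely selectable and destroys the hypergeometric law. (For a tiny example: with $a=2$, two red vertices, and $s=(\red,\red)$, your model allows both $(1,2)$ and $(2,1)$ to be chosen, while in reality at most one can occur.)

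The paper's proof addresses exactly this obstruction by a two-stage argument: first it invokes Lemma~\ref{lem:randomlike-unordered} (a hypergeometric tail bound on unordered hypergraphs) to control, for almost every $O$, the number $M_{H}$ of edges whose color \emph{multiset} equals the multiset $T$ underlying $s$; then, conditioning on $M_{H}$, the random independent orderings make the number of edges with color \emph{sequence} $s$ genuinely $\textnormal{Binomial}\bigl(M_{H},\prod_{j}I_{T}(j)!/a!\bigr)$, and a second tail bound finishes. Your one-shot hypergeometric step conflates these two sources of randomness. The fix is either to follow the paper's decomposition, or to replace the incorrect hypergeometric claim by a concentration inequality that applies to the actual compound distribution (e.g.\ Azuma's inequality for the edge-exposure martingale, since changing one edge-plus-ordering alters the count by at most~$1$); either way, an additional idea beyond what you wrote is required.
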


\begin{proof}
We use Lemma \ref{lem:randomlike-unordered} with $\frac{\varepsilon}{4}$
in the place of $\varepsilon$, i.e.\ that \eqref{eq:unordered-statement}
holds (with $\frac{\varepsilon}{4}$) for almost every hypergraph
$H\in\Hanm$. In every hypergraph in $\Hanm$, the hyperedges can
be ordered in the same number of ways: $(a!)^{m}$. So for almost
every $O\in\Oanm$, \eqref{eq:unordered-statement} holds for the
corresponding hypergraph (obtained by forgetting the orders on the
hyperedges).

Let $\Otanm\subset\Oanm$ be the family of oriented hypergraphs for
which \eqref{eq:unordered-statement} holds (forgetting the orders)
with $\frac{\varepsilon}{4}$ in the place of $\varepsilon$. Let
$u$ be the number of oriented hypergraphs in $\Otanm$ for which
\eqref{eq:ordered-statement} does not hold. Corresponding to each
such oriented hypergraph $O\in\Otanm$, there is at least one $b$-coloring
$C$ of its vertices, and an $a$-tuple of colors $s$, such that
\eqref{eq:ordered-statement} does not hold for $C$ and $s$. Therefore
\[
u\leq\left|\left\{ \left(O,C,s\right):O\in\Otanm\textnormal{, \eqref{eq:ordered-statement} does not hold for \ensuremath{O}, \ensuremath{C} and \ensuremath{s}}\right\} \right|.
\]
The number of $b$-colorings of $n$ vertices with $b$ fixed colors
is $\left|\C\right|=b^{n}$. The number of $a$-tuples of $b$ colors
is $b^{a}$. Therefore 
\[
u\leq b^{n+a}\max_{\substack{\textnormal{coloring }C\\
a\textnormal{-tuple of colors }s
}
}\left|\left\{ \begin{gathered}\left(O,C,s\right):O\in\Otanm,\\
\textnormal{\eqref{eq:ordered-statement} does not hold for \ensuremath{O}, \ensuremath{C} and }s
\end{gathered}
\right\} \right|.
\]

Fix a $b$-coloring $C$ and an $a$-tuple of colors $s$. Let $T$
be the multiset consisting of the elements of $s$ with the multiplicity
with which they occur in $s$ (that is, $T$ is $s$ forgetting the
order). If \eqref{eq:unordered-statement} holds for a $H\in\Hanm$
with $\frac{\varepsilon}{4}$, the number of hyperedges whose color
multiset is $T$ is 
\begin{align*}
M_{H} & :=\left(p^{C}(T)\pm\frac{\varepsilon}{4}\right)m=\left(\frac{\prod_{j=1}^{b}\left(n_{j}^{C}\right)^{I_{T}(j)}}{n^{a}}\cdot\frac{a!}{\prod_{j=1}^{b}I_{T}(j)!}\pm\frac{\varepsilon}{2}\right)m\\
 & =\left(\left(\prod_{i=1}^{a}\frac{n_{s_{i}}^{C}}{n}\right)\cdot\frac{a!}{\prod_{j=1}^{b}I_{T}(j)!}\pm\frac{\varepsilon}{2}\right)m
\end{align*}
 using the Proposition \ref{prop:p-asymptotic} for large enough $n$.
(Changing $\frac{\varepsilon}{4}$ to $\frac{\varepsilon}{2}$ accounts
for the fact that Proposition \ref{prop:p-asymptotic} is asymptotic.)
We can obtain an oriented hypergraph from $H$ by ordering its hyperedges
in one of the $a!$ possible ways, independently from each other.
If we take a hyperedge whose color multiset is $T$, some of these
orders yield the color sequence $s$. The number of such orders is
$\prod_{j=1}^{b}I_{T}(j)!$, so if we take a random ordering of a
hyperedge whose color multiset is $T$, the probability that it has
color sequence $s$ is
\[
\frac{\prod_{j=1}^{b}I_{T}(j)!}{a!}.
\]
So if we obtain an oriented hypergraph $O$ by randomly ordering every
hyperedge of $H$, then $\left|\left\{ e\in O:c^{C}(e)=s\right\} \right|\sim\textnormal{Binomial}\!\left(M_{H},\frac{\prod_{j=1}^{b}I_{T}(j)!}{a!}\right)$,
and the expected value of the number of hyperedges whose color sequence
is $s$ is
\[
E_{H}:=\frac{\prod_{j=1}^{b}I_{T}(j)!}{a!}M_{H}=\left(\prod_{i=1}^{a}\frac{n_{s_{i}}^{C}}{n}\pm\frac{\varepsilon}{2}\right)m.
\]
If the number of hyperedges whose color sequence is $s$ is in the
range $\left[E_{H}-\frac{\varepsilon}{2}m,\allowbreak E_{H}+\frac{\varepsilon}{2}m\right]$,
then \eqref{eq:ordered-statement} holds for $O$, $C$ and $s$,
since 
\[
E_{H}\pm\frac{\varepsilon}{2}m=\left(\prod_{i=1}^{a}\frac{n_{s_{i}}^{C}}{n}\pm\varepsilon\right)m.
\]
 We want to bound the probability that in a randomly selected oriented
hypergraph obtained from $H$, the number of hyperedges whose color
sequence is $s$ is not in the range $\left[E_{H}-\frac{\varepsilon}{2}m,E_{H}+\frac{\varepsilon}{2}m\right]=\left[E_{H}-\frac{\varepsilon m}{2M_{H}}M_{H},E_{H}+\frac{\varepsilon m}{2M_{H}}M_{H}\right]$.
By the tail bound for the binomial distribution in Proposition \ref{prop:tail},
this probability is at most 
\[
2\cdot e^{-2\left((\varepsilon m)/(2M_{H})\right)^{2}M_{H}}=2e^{-\left(\varepsilon^{2}/2\right)\cdot\left(m/M_{H}\right)\cdot m}\leq2e^{-\left(\varepsilon^{2}/2\right)\cdot m},\textnormal{ so}
\]
\begin{comment}
This is tricky: the number of random variables ($M_{H}$) may be small
depending on $C$, but then the coefficient of $M_{H}$ in the tail
bound is also big, so a bad bound is enough.
\end{comment}
\[
u\leq b^{n+a}2e^{-\left(\varepsilon^{2}/2\right)\cdot m}\left|\Otanm\right|=o\!\left(\left|\Oanm\right|\right)
\]
{} as $\frac{m}{n}\rightarrow\infty$.
\end{proof}
\begin{lem}
\label{lem:few short cycles}Let $n\rightarrow\infty$, $k\geq2$
and $m=o\!\left(n^{1+\frac{1}{k}}\right)$. Every hypergraph $H$
in $\Hanm$, with the exception of $o\!\left({{n \choose a} \choose m}\right)$
hypergraphs, has at most $n$ Berge-cycles with $k$ or fewer hyperedges.
\end{lem}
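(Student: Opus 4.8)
The statement to prove is Lemma~\ref{lem:few short cycles}: for $m = o(n^{1+1/k})$, all but a $o\!\left(\binom{\binom{n}{a}}{m}\right)$ fraction of hypergraphs in $\Hanm$ have at most $n$ Berge-cycles of length $\le k$. My plan is a first-moment (expectation) computation combined with Markov's inequality. First I would estimate, for a fixed length $\ell$ with $2 \le \ell \le k$, the expected number of Berge-cycles of length exactly $\ell$ in a uniformly random $H \in \Hanm$. A Berge-cycle of length $\ell$ is determined by a choice of $\ell$ distinct defining vertices $v_1, \dots, v_\ell$, an ordered list of $\ell$ distinct hyperedges $e_1, \dots, e_\ell$ with $v_i \in e_i \cap e_{i+1}$, and the remaining $a-2$ vertices of each $e_i$. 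The number of ways to specify such a configuration (as a potential sub-structure of the complete hypergraph) is at most $n^\ell \cdot \left(n^{a-2}\right)^\ell = n^{\ell(a-1)}$ up to constants depending only on $a,k$ (choose the defining vertices, then for each of the $\ell$ hyperedges choose its other $a-2$ vertices). Each such specific set of $\ell$ hyperedges appears in $H$ with probability roughly $\left(\binom{n}{a}^{-1} m\right)^{\ell} \approx \left(m / n^a\right)^\ell$, since we are choosing $m$ hyperedges out of $\binom{n}{a}$ — more precisely the probability that $\ell$ prescribed hyperedges are all among the chosen $m$ is $\binom{\binom{n}{a}-\ell}{m-\ell} \big/ \binom{\binom{n}{a}}{m} \le \left(m/\binom{n}{a}\right)^\ell = O\!\left(m^\ell / n^{a\ell}\right)$.

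**Main estimate.** Multiplying, the expected number of Berge-cycles of length $\ell$ is $O\!\left(n^{\ell(a-1)} \cdot m^\ell / n^{a\ell}\right) = O\!\left(m^\ell / n^{\ell}\right) = O\!\left((m/n)^\ell\right)$. Since $m = o(n^{1+1/k})$, we have $m/n = o(n^{1/k})$, so $(m/n)^\ell = o(n^{\ell/k}) \le o(n^{k/k}) = o(n)$ for every $\ell \le k$. Summing over the finitely many values $\ell = 2, 3, \dots, k$, the expected total number of Berge-cycles of length $\le k$ is $o(n)$. By Markov's inequality, the probability (over a uniform random $H \in \Hanm$) that $H$ has more than $n$ such Berge-cycles is at most $o(n)/n = o(1)$, which means the number of hypergraphs in $\Hanm$ violating the conclusion is $o\!\left(\binom{\binom{n}{a}}{m}\right)$, as required.

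**Anticipated obstacle.** The main thing to be careful about is the counting of potential Berge-cycle structures: a Berge-cycle of length $\ell$ involves $\ell$ distinct hyperedges but the defining vertices $v_i$ only pin down $2$ vertices of each hyperedge $e_i$ (namely $v_{i-1}$ and $v_i$), so I need to make sure that when I choose the remaining $a-2$ vertices of $e_i$ freely I am overcounting (which is fine for an upper bound) and not undercounting. I should also keep track that the hyperedges $e_i$ must be \emph{distinct}, which again only helps the upper bound. One small subtlety: I defined Berge-cycles to have $\ell \ge 2$ distinct hyperedges and the factor from choosing $\ell$ distinct hyperedges among the $m$ present is genuinely $O\!\left((m/n^a)^\ell\right)$ only when $\ell$ is bounded, which it is here ($\ell \le k$), so the implied constants in the $O(\cdot)$ may depend on $a$ and $k$ but not on $n$ or $m$ — this is harmless. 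Finally, I would note that the same bound covers the degenerate short cases (e.g. $\ell = 2$, two hyperedges sharing two vertices), so no separate treatment is needed.
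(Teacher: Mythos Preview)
Your proposal is correct and follows essentially the same argument as the paper: both count potential Berge-cycles of length $\ell$ by at most $n^{(a-1)\ell}$, bound the probability that a fixed set of $\ell$ hyperedges is present by $\left(m/\binom{n}{a}\right)^{\ell}$, sum over $\ell\le k$ to get expected count $o(n)$, and then apply Markov's inequality (phrased in the paper as dividing the total count of pairs $(H,B)$ by $n$). The only cosmetic difference is that the paper works with counts of pairs $(H,\textnormal{Berge-cycle})$ rather than with probabilities and expectations, but these are the same computation.
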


\begin{proof}
A Berge-cycle of length $l$ has $l$ defining vertices, and each
of its $l$ hyperedges contains $a-2$ additional vertices. So the
number of Berge-cycles of length $l$ is less than $n^{(a-1)l}$.
The number of hypergraphs in $\Hanm$ which contain a fixed Berge-cycle
of length $l$ is ${{n \choose a}-l \choose m-l}$, since the $l$
hyperedges of the Berge-cycle can be arbitrarily extended to a hypergraph
of $m$ hyperedges. Therefore the number of pairs $(H,B)$ where $H\in\Hanm$
and $B$ is any Berge-cycle of length $l$ in $H$, is less than 
\[
n^{(a-1)l}{{n \choose a}-l \choose m-l}<n^{(a-1)l}{{n \choose a} \choose m}\left(\frac{m}{{n \choose a}}\right)^{l}=O\!\left(\left(\frac{m}{n}\right)^{l}{{n \choose a} \choose m}\right).
\]
 Let $f_{k}(H)$ denotes the number of Berge-cycles of length $k$
or less in $H$. Using $m=o\!\left(n^{1+\frac{1}{k}}\right)$, we
have
\[
\sum_{H\in\Hanm}f_{k}(H)=\sum_{l=2}^{k}O\!\left(\left(\frac{m}{n}\right)^{l}{{n \choose a} \choose m}\right)=O\!\left(\left(\frac{m}{n}\right)^{k}{{n \choose a} \choose m}\right)=o\!\left(n{{n \choose a} \choose m}\right).
\]
 The number of hypergraphs $H\in\Hanm$ with more than $n$ Berge-cycles
of length $k$ or less is clearly 
\[
\frac{o\!\left(n{{n \choose a} \choose m}\right)}{n}=o\!\left({{n \choose a} \choose m}\right),
\]
 proving Lemma \ref{lem:few short cycles}.
\end{proof}
\begin{prop}
\label{prop:combined}For any $\varepsilon>0$ and $k\ge2$, there
exists an $a$-uniform hypergraph $H$ of girth more than $k$ for
which \eqref{eq:unordered-statement} in Lemma \ref{lem:randomlike-unordered}
and \eqref{eq:subhypergraph-statement} in Lemma \ref{lem:subhypergraph}
hold. There also exists an $a$-uniform oriented hypergraph $O$ of
girth more than $k$ (using the usual meaning of girth, not taking
the orders on the hyperedges into consideration) for which \eqref{eq:ordered-statement}
in Lemma \ref{lem:randomlike-ordered} holds.
\end{prop}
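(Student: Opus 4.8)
The plan is to show that, for a suitable choice of $m=m(n)$, \emph{almost all} hypergraphs in $\Hanm$ simultaneously satisfy the conclusions of Lemmas~\ref{lem:randomlike-unordered}, \ref{lem:subhypergraph} and \ref{lem:few short cycles}, and then to delete a negligible set of hyperedges to make the girth large while barely disturbing the approximate-counting statements. First I would fix $m=m(n)$ growing with $\tfrac{m}{n}\to\infty$ but $m=o\!\left(n^{1+1/k}\right)$ --- for instance $m=\bigl\lceil n^{1+\frac{1}{k+1}}\bigr\rceil$ --- so that all three lemmas apply, and set $\varepsilon'=\varepsilon/2$. By Lemma~\ref{lem:randomlike-unordered} with $\varepsilon'$, the family of $H\in\Hanm$ for which \eqref{eq:unordered-statement} fails has size $o\!\left(\binom{\binom na}{m}\right)$; by Lemma~\ref{lem:subhypergraph} with $\varepsilon'$, the family for which \eqref{eq:subhypergraph-statement} fails has the same bound; and by Lemma~\ref{lem:few short cycles} the family with more than $n$ Berge-cycles of length at most $k$ does too. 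Hence the union of the three bad families is still $o(|\Hanm|)$, so for all large enough $n$ there is an $H\in\Hanm$ outside all of them.

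Next I would destroy the short Berge-cycles: since $H$ has at most $n$ of them, delete one hyperedge from each, obtaining a subhypergraph $H'$ with $m'\ge m-n=(1-o(1))m$ hyperedges and, because removing hyperedges cannot create a Berge-cycle, girth greater than $k$. It then remains to check that $H'$ still satisfies \eqref{eq:unordered-statement} and \eqref{eq:subhypergraph-statement}, now with $m'$ hyperedges and with $\varepsilon$ in place of $\varepsilon'$. For any coloring $C$ and any multiset $T$, the number of hyperedges of $H'$ with color multiset $T$ differs from the corresponding count in $H$ by at most $n$; combined with $m=(1+o(1))m'$ and $p^{C}(T)\le1$, a short computation shows this count equals $p^{C}(T)m'\pm\varepsilon m'$ once $n$ is large. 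Likewise $q(H')\le q(H)\le p^{C_{M}}(\T)\,m(1+\varepsilon')\le p^{C_{M}}(\T)\,m'(1+\varepsilon)$, using that $p^{C_{M}}(\T)$ depends only on the (unchanged) vertex set and that $m\le(1+o(1))m'$. This proves the first assertion.

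For the oriented hypergraph, I would run the same argument with Lemma~\ref{lem:randomlike-ordered} (applied with $\varepsilon/2$) replacing Lemma~\ref{lem:randomlike-unordered}; to bound the number of oriented hypergraphs with many short Berge-cycles, note that the $(a!)^{m}$-to-one map $\Oanm\to\Hanm$ that forgets the orders sends such an oriented hypergraph to an unordered one with more than $n$ short Berge-cycles, so by Lemma~\ref{lem:few short cycles} this set has size $(a!)^{m}\cdot o\!\left(\binom{\binom na}{m}\right)=o(|\Oanm|)$. Then I would pick an $O\in\Oanm$ avoiding both bad families, delete at most $n$ hyperedges to get girth $>k$, and verify that \eqref{eq:ordered-statement} survives with $\varepsilon$ by exactly the same $o(m)$-perturbation estimate as above.

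The only step that needs genuine care --- and hence the main obstacle --- is the bookkeeping just described: Lemmas~\ref{lem:randomlike-unordered}--\ref{lem:randomlike-ordered} are stated for hypergraphs with a \emph{fixed} number $m$ of hyperedges, whereas the deletion step changes that number, so one must track how the error terms transform from ``$\pm\varepsilon' m$'' to ``$\pm\varepsilon m'$'' and confirm that the loss is absorbed by having started from $\varepsilon'<\varepsilon$. Everything else is routine, since the three exceptional sets are all $o$ of the same quantity $\binom{\binom na}{m}$ and so combine trivially.
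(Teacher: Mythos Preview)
Your proposal is correct and follows essentially the same approach as the paper: choose $m$ with $m/n\to\infty$ and $m=o(n^{1+1/k})$, intersect the good families from the three lemmas, pick a single hypergraph, delete at most $n$ hyperedges to kill the short Berge-cycles, and check that the $o(m)$ loss is absorbed by having started with a smaller $\varepsilon$. The only cosmetic difference is that the paper applies Lemma~\ref{lem:randomlike-unordered} with $\varepsilon/(4b^{a})$ and then invokes the \emph{proof} of Lemma~\ref{lem:subhypergraph} to derive \eqref{eq:subhypergraph-statement} from the surviving \eqref{eq:unordered-statement}, whereas you treat \eqref{eq:subhypergraph-statement} as a separate ``bad set'' and transfer it via the clean inequality $q(H')\le q(H)$; your route avoids peeking inside the proof of Lemma~\ref{lem:subhypergraph} and is arguably tidier.
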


\begin{proof}
Take a sufficiently large $n$, and $m=o\!\left(n^{1+\frac{1}{k}}\right)$
but such that $\frac{m}{n}\rightarrow\infty$ as $n\rightarrow\infty$.
Then there is a hypergraph $H\in\Hanm$ such that \eqref{eq:unordered-statement}
in Lemma \ref{lem:randomlike-unordered} holds with $\frac{\varepsilon}{4b^{a}}$
in place of $\varepsilon$, and $H$ contains at most $n$ Berge-cycles
with $k$ or fewer hyperedges (indeed, all but $o\!\left({{n \choose a} \choose m}\right)$
hypergraphs have both properties). Now remove a hyperedge from every
Berge-cycle of length $k$ or smaller in $H$. The resulting hypergraph
$H'$ has $m-n$ hyperedges. Fix any coloring $C$ and an $a$-element
multiset of colors $T$. In $H$, the number of hyperedges whose color
multiset with respect to $C$ is $T$ is $\left(p^{C}(T)\pm\frac{\varepsilon}{4b^{a}}\right)m$.
The number of such hyperedges in $H'$ is at least $\left(p^{C}(T)-\frac{\varepsilon}{4b^{a}}\right)m-n$
and at most $\left(p^{C}(T)+\frac{\varepsilon}{4b^{a}}\right)m$,
so it is in the range $\left(p^{C}(T)\pm\frac{\varepsilon}{2b^{a}}\right)(m-n)$
for big enough $n$ because $\frac{m}{n}\rightarrow\infty$. So \eqref{eq:unordered-statement}
in Lemma~\ref{lem:randomlike-unordered} holds for $H'$, even with
$\frac{\varepsilon}{2b^{a}}$ in the place of $\varepsilon$. From
the proof of Lemma \ref{lem:subhypergraph} it is clear that if \eqref{eq:unordered-statement}
holds with $\frac{\varepsilon}{2b^{a}}$, then \eqref{eq:subhypergraph-statement}
holds.

In every hypergraph in $\Hanm$, the hyperedges can be ordered in
the same number of ways, so Lemma \ref{lem:few short cycles} holds
for oriented hypergraphs too. The proof in the previous paragraph
works similarly for oriented hypergraphs, proving the existence of
$O$.
\end{proof}
Now we use Proposition \ref{prop:combined} to prove Theorem \ref{thm:high girth small p}.
\begin{boldproof}[Proof of Theorem \ref{thm:high girth small p}]
By Proposition \ref{prop:combined}, there is an $a$-uniform hypergraph
$H$ of girth more than $k$ for which \eqref{eq:subhypergraph-statement}
in Lemma \ref{lem:subhypergraph} holds. We use \eqref{eq:subhypergraph-statement}
with $\T$ consisting of those multisets which contain at least two
different colors, and with $\frac{\varepsilon}{2}$ in the place of
$\varepsilon$. With the notation of Lemma \ref{lem:subhypergraph},
\begin{align*}
q(H) & <p^{C_{M}}(\T)m\left(1+\frac{\varepsilon}{2}\right)=\left(\sum_{T\in\T}p^{C_{M}}(T)\right)m\left(1+\frac{\varepsilon}{2}\right)\\
 & =\left(1-\sum_{j=1}^{b}p^{C_{M}}\Bigl(\Bigl\{\overbrace{j,j,\ldots,j}^{a}\Bigr\}\Bigr)\right)m\left(1+\frac{\varepsilon}{2}\right)\leq\left(1-\sum_{j=1}^{b}\left(\frac{n_{j}^{C_{M}}}{n}\right)^{a}\right)m\left(1+\varepsilon\right)
\end{align*}
 using the asymptotic Proposition \ref{prop:p-asymptotic} for large
enough $n$. $\sum_{j=1}^{b}n_{j}^{C_{M}}=n$, and using the power
mean inequality we get that 
\[
\left(\frac{1}{b}\sum_{j=1}^{b}\left(\frac{n_{j}^{C_{M}}}{n}\right)^{a}\right)^{\frac{1}{a}}\geq\frac{1}{b}.
\]
 So $\sum_{j=1}^{b}\left(\frac{n_{j}^{C_{M}}}{n}\right)^{a}\geq\frac{1}{b^{a-1}}$,
which implies the statement.
\end{boldproof}
We show another example application of Lemma \ref{lem:subhypergraph}
and Proposition \ref{prop:combined}. A $b$-coloring of the vertices
of a hypergraph is called a \emph{rainbow (or strong) coloring} if
all the vertices have different colors in every hyperedge. (For $a$-uniform
hypergraphs, this is only possible if $a\leq b$.)
\begin{prop}
\label{prop:rainbow}Let $n\rightarrow\infty$ and $\frac{m}{n}\rightarrow\infty$.
For any fixed $\varepsilon>0$ and integers $2\leq a\leq b$, every
hypergraph $H$ in $\Hanm$, with the exception of $o\!\left({{n \choose a} \choose m}\right)$
hypergraphs, contains no subhypergraph that is rainbow colorable with
$b$ colors with more than $\binom{b}{a}\frac{a!}{b^{a}}e(H)\left(1+\varepsilon\right)$
hyperedges. Furthermore, for any $\varepsilon>0$ and integers $k\ge2$
and $2\leq a\leq b$, there exists an $a$-uniform hypergraph $H$
of girth more than $k$ which does not contain a subhypergraph that
is rainbow colorable with $b$ colors with more than $\binom{b}{a}\frac{a!}{b^{a}}e(H)\left(1+\varepsilon\right)$
hyperedges.
\end{prop}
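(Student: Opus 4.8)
\textbf{Proof proposal for Proposition \ref{prop:rainbow}.}

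The plan is to apply Lemma \ref{lem:subhypergraph} and Proposition \ref{prop:combined} with a suitable choice of the family $\T$ of color multisets, exactly in the same way that Theorem \ref{thm:high girth small p} was deduced. First I would observe that a $b$-coloring $C$ makes a subhypergraph of $H$ rainbow colorable precisely when every hyperedge of that subhypergraph has a color multiset consisting of $a$ \emph{distinct} colors from $[b]$; so the correct choice is to let $\T$ be the family of all $a$-element \emph{sets} (multisets with all multiplicities equal to $1$) of colors in $[b]$. With this $\T$, the function $q(H)$ of Lemma \ref{lem:subhypergraph} is exactly the maximum number of hyperedges in a $b$-rainbow-colorable subhypergraph of $H$. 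Then Lemma \ref{lem:subhypergraph} gives, for all but $o\!\left({{n \choose a} \choose m}\right)$ hypergraphs $H\in\Hanm$, the bound $q(H)\le p^{C_M}(\T)m(1+\varepsilon)$, which yields the first (non-girth) statement once $p^{C_M}(\T)$ is evaluated.

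The key computation is to show $p^{C_M}(\T)\le\binom{b}{a}\frac{a!}{b^{a}}$ asymptotically. By definition $p^{C}(\T)=\sum_{T\in\T}p^{C}(T)$, and by Proposition \ref{prop:p-asymptotic}, for a multiset $T$ that is in fact a set $\{j_1,\dots,j_a\}$ of distinct colors we have $p^{C}(T)\sim a!\prod_{i=1}^{a}\frac{n_{j_i}^{C}}{n}$ (since all $I_T(j)!=1$). Hence $p^{C}(\T)\sim a!\sum_{\{j_1,\dots,j_a\}}\prod_{i=1}^{a}x_{j_i}$, where $x_j=n_j^{C}/n$ and the sum is over $a$-element subsets of $[b]$; i.e.\ $p^{C}(\T)\sim a!\,e_a(x_1,\dots,x_b)$, the $a$-th elementary symmetric polynomial. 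Since $\sum_j x_j=1$ and the $x_j\ge 0$, Maclaurin's inequality (or the AM--GM-type bound $e_a(x)\le\binom{b}{a}(1/b)^a$ for a probability vector, with equality at $x_j=1/b$) gives $e_a(x_1,\dots,x_b)\le\binom{b}{a}b^{-a}$, so $p^{C_M}(\T)\le\binom{b}{a}\frac{a!}{b^{a}}(1+o(1))$. Folding the $o(1)$ into the $\varepsilon$ (using $\frac\varepsilon2$ in place of $\varepsilon$ in Lemma \ref{lem:subhypergraph} and taking $n$ large, just as in the proof of Theorem \ref{thm:high girth small p}) yields $q(H)\le\binom{b}{a}\frac{a!}{b^{a}}e(H)(1+\varepsilon)$ for almost every $H$.

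For the second, high-girth statement, I would invoke Proposition \ref{prop:combined}, which produces an $a$-uniform hypergraph $H$ of girth greater than $k$ for which \eqref{eq:subhypergraph-statement} in Lemma \ref{lem:subhypergraph} holds; applying the bound just derived to this $H$ finishes the proof. The only mild subtlety — the place where a little care is needed rather than a genuine obstacle — is confirming that the maximizing coloring $C_M$ really is (asymptotically) the balanced one, i.e.\ justifying the equality case of Maclaurin's inequality for a probability vector; this is standard, and one does not even need the exact maximizer, only the upper bound $e_a(x)\le\binom{b}{a}b^{-a}$ valid for every probability vector $x$. Everything else is a direct transcription of the argument already used for Theorem \ref{thm:high girth small p}.
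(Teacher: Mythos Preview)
Your proposal is correct and follows essentially the same approach as the paper: you choose $\T=\binom{[b]}{a}$, apply Lemma~\ref{lem:subhypergraph} (with $\varepsilon/2$) and Proposition~\ref{prop:combined}, and then bound $p^{C_M}(\T)$ by optimizing $a!\,e_a(x_1,\dots,x_b)$ over probability vectors. The only difference is that where you invoke Maclaurin's inequality to obtain $e_a(x)\le\binom{b}{a}b^{-a}$, the paper instead gives an explicit smoothing argument (replacing a pair $n_{j_1}^{C_M}<n/b<n_{j_2}^{C_M}$ by $n/b$ and $n_{j_1}^{C_M}+n_{j_2}^{C_M}-n/b$ and checking that the sum does not decrease); both arguments are standard and yield the same bound.
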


\begin{proof}
A hypergraph coloring is a rainbow coloring if the color multiset
of every hyperedge is a conventional set (i.e., every color appears
at most once in the multiset). Let $\T={[b] \choose a}$. We will
prove that if \eqref{eq:subhypergraph-statement} in Lemma \ref{lem:subhypergraph}
holds for a hypergraph $H$ with this $\T$ and with $\frac{\varepsilon}{2}$
in the place of $\varepsilon$, then it does not contain a subhypergraph
that is rainbow colorable with $b$ colors with more than $\binom{b}{a}\frac{a!}{b^{a}}e(H)\left(1+\varepsilon\right)$
hyperedges. The first statement of the proposition then follows directly
from Lemma \ref{lem:subhypergraph}, while the second statement follows
from Lemma \ref{prop:combined}.

With the notation of Lemma \ref{lem:subhypergraph}, and using the
asymptotic Proposition \ref{prop:p-asymptotic} for large enough $n$,
\begin{align}
q(H) & <p^{C_{M}}(\T)m\left(1+\frac{\varepsilon}{2}\right)=\left(\sum_{T\in\T}p^{C_{M}}(T)\right)m\left(1+\frac{\varepsilon}{2}\right)\nonumber \\
 & \leq\left(\sum_{T\in\T}\prod_{j\in T}\frac{n_{j}^{C_{M}}}{n}\right)a!m\left(1+\varepsilon\right).\label{eq:rainbow}
\end{align}
We claim that, under the assumption that $\sum_{j=1}^{b}n_{j}^{C_{M}}=n$,
\eqref{eq:rainbow} takes its maximum when $n_{1}^{C_{M}}=\ldots=n_{b}^{C_{M}}=\frac{n}{b}$.
Let us assume that the $n_{j}^{C_{M}}$'s are not all equal \textendash{}
then there is a $j_{1}$ and $j_{2}$ such that $n_{j_{1}}^{C_{M}}<\frac{n}{b}<n_{j_{2}}^{C_{M}}$.
Rewriting the first factor in \eqref{eq:rainbow}, we have 
\begin{align*}
\sum_{T\in\T}\prod_{j\in T}\frac{n_{j}^{C_{M}}}{n} & =\overbrace{\sum_{\tilde{T}\in\binom{[b]\setminus\{j_{1},j_{2}\}}{a}}\prod_{j\in\tilde{T}}\frac{n_{j}^{C_{M}}}{n}}^{\left|T\cap\{j_{1},j_{2}\}\right|=0}+\overbrace{\left(n_{j_{1}}^{C_{M}}+n_{j_{2}}^{C_{M}}\right)\sum_{\tilde{T}\in\binom{[b]\setminus\{j_{1},j_{2}\}}{a-1}}\prod_{j\in\tilde{T}}\frac{n_{j}^{C_{M}}}{n}}^{\left|T\cap\{j_{1},j_{2}\}\right|=1}\\
 & {}+\overbrace{n_{j_{1}}^{C_{M}}n_{j_{2}}^{C_{M}}\sum_{\tilde{T}\in\binom{[b]\setminus\{j_{1},j_{2}\}}{a-2}}\prod_{j\in\tilde{T}}\frac{n_{j}^{C_{M}}}{n}}^{\left|T\cap\{j_{1},j_{2}\}\right|=2}.
\end{align*}
 If we replace $n_{j_{1}}^{C_{M}}$ with $\frac{n}{b}$, and $n_{j_{2}}^{C_{M}}$
with $n_{j_{2}}^{C_{M}}-\frac{n}{b}+n_{j_{1}}^{C_{M}}$, \eqref{eq:rainbow}
does not decrease: the first two terms do not change, while in the
third term, $n_{j_{1}}^{C_{M}}n_{j_{2}}^{C_{M}}$ is replaced by $\frac{n}{b}\left(n_{j_{2}}^{C_{M}}-\frac{n}{b}+n_{j_{1}}^{C_{M}}\right)=n_{j_{1}}^{C_{M}}n_{j_{2}}^{C_{M}}+\left(n_{j_{2}}^{C_{M}}-\frac{n}{b}\right)\left(\frac{n}{b}-n_{j_{1}}^{C_{M}}\right)>n_{j_{1}}^{C_{M}}n_{j_{2}}^{C_{M}}$.
Repeating this step, we can increase the number of $n_{j}^{C_{M}}$'s
which equal $\frac{n}{b}$ without decreasing \eqref{eq:rainbow},
until all of them equal $\frac{n}{b}$.

So 
\[
q(H)\leq\left(\sum_{T\in\T}\prod_{j\in T}\frac{1}{b}\right)a!m\left(1+\varepsilon\right)=\binom{b}{a}\frac{a!}{b^{a}}m\left(1+\varepsilon\right).\qedhere
\]
\end{proof}

\section{\label{sec:Proof-of-MAINTheorem}Subgraphs of $C_{2k}$-free graphs
\textendash{} Proof of Theorems~\ref{thm:max_c} and \ref{thm:C_4-only}}
\begin{boldproof}[Proof of Theorem \ref{thm:max_c}]
Fix $\varepsilon>0$. By Theorem \ref{thm:high girth small p}, there
exists a $2k-1$-uniform hypergraph $H$ with girth more than $2k$
which does not contain a 2-colorable subhypergraph having more than
$\left(1-\frac{1}{2^{2k-2}}\right)e(H)\left(1+\varepsilon\right)$
hyperedges. We produce a graph $G_{H}$ from the hypergraph $H$ by
replacing each hyperedge of $H$ with a complete graph (i.e.\ a clique)
on $2k-1$ vertices. We refer to these complete graphs as $2k-1$-cliques.
It is easy to check that the resulting graph $G_{H}$ is $C_{2k}$-free. 

Notice that since the girth of $H$ is more than $2k\ge4$, no two
hyperedges of $H$ intersect in more than $1$ vertex. Therefore,
the $2k-1$-cliques of $G_{H}$ are edge-disjoint, and by definition
every edge of $G_{H}$ is in some $2k-1$-clique. We show that $G_{H}$
does not have a bipartite subgraph with girth more than $2k$ which
has more than $\left(1-\frac{1}{2^{2k-2}}\right)\frac{2k-2}{\binom{2k-1}{2}}e(G_{H})\left(1+\varepsilon\right)=\left(1-\frac{1}{2^{2k-2}}\right)\frac{2}{2k-1}e(G_{H})\left(1+\varepsilon\right)$
edges. Assume that $B$ is a bipartite subgraph of $G_{H}$ with girth
more than $2k$. Notice that any set of more than $2k-2$ edges from
a clique on $2k-1$ vertices must contain a cycle of length at most
$2k-1$. Therefore $B$ can contain at most $2k-2$ edges from each
$2k-1$-clique of $G_{H}$. Furthermore, since $B$ is bipartite,
there is a $2$-coloring of the vertices so that the edges of $B$
are properly colored. If an edge of $B$ is contained in a $2k-1$-clique
of $G_{H}$, then the corresponding hyperedge of $H$ contains two
vertices with different colors in this $2$-coloring. By our assumption
on $H$, at most $\left(1-\frac{1}{2^{2k-2}}\right)(1+\varepsilon)$
fraction of the hyperedges are not monochromatic in this 2-coloring
of the vertices. So $B$ has at most $\left(1-\frac{1}{2^{2k-2}}\right)(2k-2)e(H)\left(1+\varepsilon\right)$
edges. Since $e(G_{H})=\binom{2k-1}{2}e(H)$, $B$ has at most $\left(1-\frac{1}{2^{2k-2}}\right)\frac{2}{2k-1}e(G_{H})\left(1+\varepsilon\right)$
edges, as desired.
\end{boldproof}
In the proof of Theorem \ref{thm:C_4-only}, we use the following
proposition. For a proof, see the proof of Proposition 5 in \cite{Kuhn:2005}.
(Note that the bound can be attained when $w\geq\binom{u}{2}$.)
\begin{prop}
\label{prop:complete-bipartite}In the complete bipartite graph $K_{u,w}$,
a $C_{4}$-free subgraph has at most $w+\binom{u}{2}$ edges.
\end{prop}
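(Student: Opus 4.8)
The plan is to use a standard double-counting argument on \emph{cherries} (paths of length two with both endpoints in the larger side $W$). Let $H \subseteq K_{u,w}$ be a $C_4$-free subgraph, where $U$ is the side of size $u$ and $W$ is the side of size $w$. For each vertex $v \in W$, let $d(v)$ denote its degree in $H$; note $d(v) \le u$. The number of cherries centered at $v$ (pairs of edges at $v$, which connect $v$ to two distinct vertices of $U$) is $\binom{d(v)}{2}$. Since $H$ is $C_4$-free, no pair of vertices of $U$ can be the endpoints of two different cherries, so summing over $v \in W$ we get $\sum_{v \in W} \binom{d(v)}{2} \le \binom{u}{2}$.

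Next I would extract the edge bound from this inequality. Write $e(H) = \sum_{v \in W} d(v)$. The key observation is the pointwise bound $d(v) \le \binom{d(v)}{2} + 1$, which holds for every nonnegative integer $d(v)$: it is trivial when $d(v) \in \{0,1,2\}$ and follows from $\binom{d}{2} = \frac{d(d-1)}{2} \ge d$ when $d \ge 3$. Summing this over all $v \in W$ gives
\[
e(H) = \sum_{v \in W} d(v) \le \sum_{v \in W}\left(\binom{d(v)}{2} + 1\right) = \sum_{v \in W}\binom{d(v)}{2} + w \le \binom{u}{2} + w,
\]
which is exactly the claimed bound.

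I do not expect any real obstacle here; the only mild subtlety is choosing the right pointwise inequality $d \le \binom{d}{2}+1$ so that the cherry count and the trivial per-vertex count combine cleanly, rather than attempting a convexity/Cauchy--Schwarz estimate (which would give a weaker, non-tight bound). For the tightness remark when $w \ge \binom{u}{2}$, one takes a $C_4$-free bipartite graph realizing all $\binom{u}{2}$ pairs exactly once among $\binom{u}{2}$ of the $W$-vertices (each such vertex having degree $2$, contributing one distinct pair), and joins each of the remaining $w - \binom{u}{2}$ vertices of $W$ to a single vertex of $U$; this graph is $C_4$-free and has $2\binom{u}{2} + (w - \binom{u}{2}) = w + \binom{u}{2}$ edges. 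Since the proof is short and self-contained, I would simply reference it as done in \cite{Kuhn:2005} rather than reproduce it in full.
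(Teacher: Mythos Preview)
Your proof is correct. The paper itself does not give a proof of this proposition at all: it simply refers the reader to Proposition~5 in \cite{Kuhn:2005}, exactly as you suggest doing at the end of your proposal. Your cherry-counting argument (bounding $\sum_{v\in W}\binom{d(v)}{2}\le\binom{u}{2}$ and combining with the pointwise inequality $d\le\binom{d}{2}+1$, equivalently $(d-1)(d-2)\ge 0$) is the standard proof and almost certainly what appears in \cite{Kuhn:2005}, so there is nothing further to compare.
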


\begin{boldproof}[Proof of Theorem \ref{thm:C_4-only}]
Let $l$ be a large integer. By Proposition \ref{prop:combined},
there exists a $k-1+l$-uniform oriented hypergraph $O$ with girth
more than $2k$ for which \eqref{eq:ordered-statement} in Lemma \ref{lem:randomlike-ordered}
holds with $\frac{\varepsilon}{24\cdot2^{k-1+l}}$ in place of $\varepsilon$.
Let $n$ be the number of vertices of $O$. We produce a graph $G_{O}$
from the oriented hypergraph $O$ by replacing each hyperedge of $O$
with a copy of $K_{k-1,m}$ the following way: in a hyperedge $(v_{1},\ldots,v_{k-1+l})$,
we connect every vertex in $\{v_{1},\ldots,v_{k-1}\}$ with every
vertex in $\{v_{k},\ldots,v_{k-1+l}\}$ with an edge. The resulting
graph $G_{O}$ is $C_{2k}$-free.\setlength\emergencystretch{\hsize}

Since the girth of $O$ is more than $2k\geq4$, no two hyperedges
of $O$ intersect in more than 1 vertex. Therefore the copies of $K_{k-1,l}$
in $G_{O}$ are edge-disjoint, and by definition every edge of $G_{O}$
is in one of the copies of $K_{k-1,l}$. We show that $G_{O}$ does
not have a bipartite and $C_{4}$-free subgraph which has more than
$\left(1-\frac{1}{2^{k-1}}\right)\frac{1}{k-1}e(G_{O})(1+\varepsilon)$
edges. Assume that $B$ is a bipartite and $C_{4}$-free subgraph
of $G_{O}$, its classes being $pn$ red vertices and $(1-p)n$ blue
vertices. Now consider a random hyperedge $e=(v_{1},\ldots,v_{k-1},v_{k},\ldots,v_{k-1+l})$
of $O$. How many edges of $B$ can there be between the vertices
of $e$? Each such edge has a red and a blue endpoint; also, each
such edge has an endpoint in $\{v_{1},\ldots,v_{k-1}\}$ and an endpoint
in $\{v_{k},\ldots,v_{k-1+l}\}$. Let $u$ and $w$ be the number
of red vertices among $\{v_{1},\ldots,v_{k-1}\}$ and $\{v_{k},\ldots,v_{k-1+l}\}$
respectively. The restriction of $B$ to the vertices of $e$ (which
we will denote \global\long\def\Be{B|_{e}}
$\Be$) is thus a $C_{4}$-free subgraph of the union of a $K_{u,l-w}$
and a $K_{k-1-u,w}$ on disjoint vertex sets. We have three possibilities:
\begin{itemize}
\item \setlength\emergencystretch{0em}$u\notin\{0,k-1\}$. Then, by Proposition
\ref{prop:complete-bipartite}, $\Be$ consists of at most $l-w+{u \choose 2}+w+\binom{k-1-u}{2}<l+\binom{k-1}{2}$
edges.
\item $u=k-1$. Then $K_{k-1-u,w}$ is degenerate (as $k-1-u=0$), and $\Be$
has at most $l-w+{k-1 \choose 2}$ edges.
\item $u=0$. Then $K_{u,l-w}$ is degenerate, and $\Be$ has at most $w+\binom{k-1}{2}=l+\binom{k-1}{2}-(l-w)$
edges.
\end{itemize}
Let $(C_{1},\ldots,C_{k-1+l})$ be the color sequence of $e$ (with
$C_{i}\in\{\red,\blue\}$). For any color sequence $(c_{1},\ldots,\allowbreak c_{k-1+l})$
(with $c_{i}\in\{\red,\blue\}$), the probability that $(C_{1},\ldots,C_{k-1+l})=(c_{1},\ldots,c_{k-1+l})$
is $p^{\left|\left\{ i:c_{i}=\red\right\} \right|}(1-p)^{\left|\left\{ i:c_{i}=\blue\right\} \right|}\pm\frac{\varepsilon}{24\cdot2^{k-1+l}}$
since \eqref{eq:ordered-statement} in Lemma \ref{lem:randomlike-ordered}
holds for $O$ with $\frac{\varepsilon}{24\cdot2^{k-1+l}}$. (Note
that $e$ was chosen as a random hyperedge of $O$.) Let $\tilde{C}_{1},\ldots,\tilde{C}_{k-1+l}$
be independent random variables which take the value ``$\red$''
with probability $p$ and the value ``$\blue$'' with probability
$1-p$. Let $f(C_{1},\ldots,C_{k-1+l})$ be a real valued function
of a color sequence. We claim that
\begin{equation}
\bigl|\E(f(C_{1},\ldots,C_{k-1+l}))-\E(f(\tilde{C}_{1},\ldots,\tilde{C}_{k-1+l}))\bigr|\leq\frac{\varepsilon}{24}\max\left|f\right|.\label{eq:randomlike}
\end{equation}
 Indeed, 
\[
\E(f(\tilde{C}_{1},\ldots,\tilde{C}_{k-1+l}))=\sum_{\substack{(c_{1},\ldots,c_{k-1+l})\\
{}\in\{\red,\blue\}^{k-1+l}
}
}p^{\left|\left\{ i:c_{i}=\red\right\} \right|}(1-p)^{\left|\left\{ i:c_{i}=\blue\right\} \right|}f(c_{1},\ldots,c_{k-1+l}),\textnormal{ and}
\]
\begin{gather*}
\E(f(C_{1},\ldots,C_{k-1+l}))=\sum_{\substack{(c_{1},\ldots,c_{k-1+l})\\
{}\in\{\red,\blue\}^{k-1+l}
}
}\left(p^{\left|\left\{ i:c_{i}=\red\right\} \right|}(1-p)^{\left|\left\{ i:c_{i}=\blue\right\} \right|}\pm\frac{\varepsilon}{24\cdot2^{k-1+l}}\right)\cdot\\
\cdot f(c_{1},\ldots,c_{k-1+l})=\E(f(\tilde{C}_{1},\ldots,\tilde{C}_{k-1+l}))\\
{}+\sum_{\substack{(c_{1},\ldots,c_{k-1+l})\\
{}\in\{\red,\blue\}^{k-1+l}
}
}\left(\pm\frac{\varepsilon}{24\cdot2^{k-1+l}}\right)f(c_{1},\ldots,c_{k-1+l})\\
=\E(f(\tilde{C}_{1},\ldots,\tilde{C}_{k-1+l}))\pm\frac{\varepsilon}{24}\max\left|f\right|.
\end{gather*}

Using \eqref{eq:randomlike} with $f(C_{1},\ldots,C_{k-1+l})\allowbreak=\begin{cases}
1 & \textnormal{if }C_{1}=\ldots=C_{k-1}=\red\\
0 & \textnormal{otherwise}
\end{cases}$, we have $P(u=k-1)\allowbreak=\E(I_{u=k-1})=p^{k-1}\pm\frac{\varepsilon}{24}$;
with $f(C_{1},\ldots,C_{k-1+l})\allowbreak=\begin{cases}
1 & \textnormal{if }C_{1}=\ldots=C_{k-1}=\blue\\
0 & \textnormal{otherwise}
\end{cases}$, we have $P(u=0)\allowbreak=\E(I_{u=0})=(1-p)^{k-1}\pm\frac{\varepsilon}{24}$;
and with $f(C_{1},\ldots,C_{k-1+l})\allowbreak=\left|\{i\in\{k,\ldots,k-1+l\}:C_{i}=\red\}\right|$,
we have $E(w)=pl\pm\frac{\varepsilon}{24}l$. So 
\begin{align*}
\E(e(\Be)) & =P(u\notin\{0,k-1\})\left(l+\binom{k-1}{2}\right)+P(u=k-1)E\!\left(l-w+{k-1 \choose 2}\right)\\
 & {}+P(u=0)E\!\left(l+\binom{k-1}{2}-(l-w)\right)\\
 & \leq l+\binom{k-1}{2}-\left(p^{k-1}\pm\frac{\varepsilon}{24}\right)\left(p\pm\frac{\varepsilon}{24}\right)l-\left((1-p)^{k-1}\pm\frac{\varepsilon}{24}\right)\left(1-p\pm\frac{\varepsilon}{24}\right)l\\
 & \leq l+\binom{k-1}{2}-p^{k}l-(1-p)^{k}l+\frac{\varepsilon}{4}l\leq\left(1-\frac{1}{2^{k-1}}\right)l+\binom{k-1}{2}+\frac{\varepsilon}{4}l
\end{align*}
 assuming $\varepsilon\leq1$.

That is, if $O$ has $m$ hyperedges, $e(B)=\left(\left(1-\frac{1}{2^{k-1}}\right)l+\binom{k-1}{2}+\frac{\varepsilon}{4}l\right)m$,
while $e(G_{O})=m(k-1)l$. Let $l\geq\frac{k(k-2)}{\varepsilon}$,
then 
\begin{align*}
e(B) & \leq\left(\left(1-\frac{1}{2^{k-1}}\right)\frac{1}{k-1}+\frac{k-2}{2l}+\frac{\varepsilon}{4(k-1)}\right)e(G_{O})\\
 & \leq\left(\left(1-\frac{1}{2^{k-1}}\right)\frac{1}{k-1}+\frac{\varepsilon}{k}\right)e(G_{O})\leq\left(1-\frac{1}{2^{k-1}}\right)\frac{1}{k-1}e(G_{O})(1+\varepsilon).\qedhere
\end{align*}
\end{boldproof}

\section{\label{sec:pastings}Pasting $C_{6}$'s to produce a $C_{8}$-free
graph}

We will make use of the following proposition of Nešet\v{r}il and
Rödl \cite{nevsetvril1978probabilistic} in the second example, and
in the general version of the first example.
\begin{prop}[Nešet\v{r}il, Rödl \cite{nevsetvril1978probabilistic}]
\label{Prob_method} For any positive integers $r\ge2$ and $s\ge3$,
there exists an $n_{0}\in\naturals$ such that for any integer $n\ge n_{0}$
there is a $r$-uniform hypergraph with girth at least $s$ and more
than $n^{1+1/s}$ hyperedges. 
\end{prop}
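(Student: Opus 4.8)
The plan is to obtain this immediately from Lemma~\ref{lem:few short cycles} by the alteration (deletion) method; there is no real obstacle beyond a small amount of exponent bookkeeping. A hypergraph has girth at least $s$ exactly when it has no Berge-cycle with $s-1$ or fewer hyperedges, so I would apply Lemma~\ref{lem:few short cycles} with uniformity $a=r$ and cycle-length parameter $k=s-1$; this is legitimate since $s\ge3$ forces $k\ge2$.

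First I would fix, for each $n$, the number of hyperedges to be $m:=\lceil n^{1+1/s}\rceil+n+1$. The exponent $1+1/s$ that we ultimately need lies strictly below the exponent $1+1/(s-1)=1+1/k$ up to which Lemma~\ref{lem:few short cycles} is valid, so $m=O\!\left(n^{1+1/s}\right)=o\!\left(n^{1+1/k}\right)$, while at the same time $m-n=\lceil n^{1+1/s}\rceil+1>n^{1+1/s}$; one also notes $m<\binom{n}{r}$ for large $n$ because $1+1/s<2\le r$. Lemma~\ref{lem:few short cycles} then says that for all sufficiently large $n$, almost every hypergraph in $\h(r,n,m)$ has at most $n$ Berge-cycles with $s-1$ or fewer hyperedges; I pick one such $H$.

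The second step is the deletion: from each Berge-cycle of length at most $s-1$ in $H$, remove one of its hyperedges, and call the result $H'$. At most $n$ hyperedges are deleted, so $e(H')\ge m-n>n^{1+1/s}$, and $H'$ still has $n$ vertices. Since any Berge-cycle of $H'$ is a Berge-cycle of $H$ (Berge-cycles are subhypergraphs), and every short Berge-cycle of $H$ has lost one of its hyperedges, $H'$ contains no Berge-cycle of length at most $s-1$; that is, $H'$ has girth at least $s$. Letting $n_0$ be large enough that every ``sufficiently large $n$'' clause above is in force yields the statement.

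The only point requiring any care, and the place a reader should check, is the choice of $m$: it must be big enough that after discarding the $O(n)$ hyperedges used to break short Berge-cycles we still exceed $n^{1+1/s}$, yet small enough, namely $o\!\left(n^{1+1/(s-1)}\right)$, for Lemma~\ref{lem:few short cycles} to apply. There is ample room, since $n^{1+1/(s-1)}/n^{1+1/s}=n^{1/(s(s-1))}\to\infty$, so taking $m$ just above $n^{1+1/s}+n$ works for every $r$ and every $s\ge3$. If one prefers not to invoke Lemma~\ref{lem:few short cycles}, the same argument runs from scratch on the binomial random $r$-uniform hypergraph: include each $r$-subset of $[n]$ independently with probability roughly $n^{1/(s-1)}/n^{r-1}$, bound the expected number of short Berge-cycles exactly as in the proof of that lemma, and delete one hyperedge from each, which is essentially the original argument of Ne\v{s}et\v{r}il and R\"{o}dl.
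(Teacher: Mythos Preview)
The paper does not supply its own proof of this proposition; it is quoted as a result of Ne\v{s}et\v{r}il and R\"odl and used as a black box in Section~\ref{sec:pastings}. Your derivation from Lemma~\ref{lem:few short cycles} via the deletion method is correct, and it is essentially the same alteration argument the paper itself carries out in Proposition~\ref{prop:combined} (pick $m$ with $n\ll m=o\!\left(n^{1+1/k}\right)$, take an $H\in\Hanm$ with at most $n$ short Berge-cycles, and delete one hyperedge from each). Your exponent bookkeeping---choosing $m$ just above $n^{1+1/s}+n$ so that $m=o\!\left(n^{1+1/(s-1)}\right)$ while $m-n>n^{1+1/s}$---is exactly the right observation, and this is indeed the probabilistic argument of Ne\v{s}et\v{r}il and R\"odl that the paper cites.
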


\subsection{First example}

For our construction here we will need a bipartite graph of girth
at least 10 with many edges (and with degree at least 2 in every vertex).
We will derive such a graph from the following construction of Benson
\cite{benson1966minimal}.
\begin{thm}[Benson \cite{benson1966minimal}]
Let $q$ be an odd prime power. There is a $(q+1)$-regular, bipartite,
girth 12 graph $Q$ with $2(q^{5}+q^{4}+q^{3}+q^{2}+q+1)$ vertices. 
\end{thm}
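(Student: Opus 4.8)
Since this is a classical result I will only indicate the route I would take. The plan is to produce $Q$ as the incidence graph of a \emph{generalized hexagon of order $(q,q)$}: a point--line incidence geometry $\mathcal{G}$ whose bipartite incidence graph $Q$ has diameter $6$ and girth $12$, in which every line contains $q+1$ points and every point lies on $q+1$ lines. Such a $Q$ is automatically bipartite (points versus lines) and $(q+1)$-regular, and has girth $12$; and the standard counting argument for generalized polygons --- walk outward from a fixed vertex of $Q$, using girth $12$ to rule out early coincidences and diameter $6$ to stop the walk --- shows that $\mathcal{G}$ has exactly $(1+q)(1+q^{2}+q^{4})=q^{5}+q^{4}+q^{3}+q^{2}+q+1$ points and the same number of lines, so $Q$ has $2(q^{5}+q^{4}+q^{3}+q^{2}+q+1)$ vertices. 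Thus everything reduces to constructing one generalized hexagon of order $(q,q)$.

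For the construction I would use the split Cayley hexagon $H(q)$ inside $\mathrm{PG}(6,q)$ (it exists for every prime power $q$, so the parity restriction in the statement is not essential for this route). Take the parabolic quadric $\mathcal{Q}\colon X_{0}X_{4}+X_{1}X_{5}+X_{2}X_{6}=X_{3}^{2}$; declare the \emph{points} of $H(q)$ to be all points of $\mathcal{Q}$, and the \emph{lines} of $H(q)$ to be those lines of $\mathrm{PG}(6,q)$ that lie on $\mathcal{Q}$ and whose Grassmann coordinates satisfy a fixed system of six linear equations (these equations single out the $G_{2}(q)$-geometry inside the $6$-dimensional orthogonal space). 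It then has to be verified that: (i) every point of $\mathcal{Q}$ lies on exactly $q+1$ of the selected lines, a linear-algebra count carried out in the tangent hyperplane at the point modulo the point; (ii) the incidence graph has no circuit of length $4$, $6$, $8$, or $10$, i.e.\ two selected lines meet in at most one point and there is no ``triangle'', ``quadrilateral'' or ``pentagon'' of alternating points and lines; and (iii) the incidence graph has diameter $6$. Conditions (i)--(iii) are exactly the statement that $H(q)$ is a generalized hexagon of order $(q,q)$.

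The substance of the argument --- and the step I expect to be the main obstacle --- is verifying (ii) and (iii), which is unpleasant to do directly in coordinates. The efficient way is to use the automorphism group: the Chevalley group $G_{2}(q)$, of order $q^{6}(q^{6}-1)(q^{2}-1)$, acts on $\mathcal{Q}$ preserving the chosen line set, and this action is transitive on flags (incident point--line pairs), in fact on ordered paths of length $6$ in the incidence graph. Flag-transitivity reduces the ``no short circuit'' check (ii) to a single finite computation through one fixed flag, and an orbit-counting comparison of $|G_{2}(q)|$ with a point (or flag) stabilizer against the vertex count above pins down both the $(q+1)$-regularity and diameter $6$. Once $H(q)$ is certified as a generalized hexagon of order $(q,q)$, we take $Q$ to be its incidence graph: by construction it is bipartite and $(q+1)$-regular, it has girth $12$ as the incidence graph of a generalized hexagon, and it has $2(q^{5}+q^{4}+q^{3}+q^{2}+q+1)$ vertices by the count in the first paragraph, which is the theorem. (Closer to Benson's original treatment, one can instead describe the adjacencies of $Q$ directly from coset data for $G_{2}(q)$ and count short cycles there, but the geometric model above is the most transparent.)
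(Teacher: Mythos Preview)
The paper does not prove this theorem at all: it is quoted as a black box from Benson's original paper \cite{benson1966minimal} and used only to derive Corollary~\ref{girth 10}. So there is no ``paper's own proof'' to compare your proposal against.

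That said, your outline is the standard modern route to Benson's result, and it is essentially correct as a sketch: the incidence graph of a generalized hexagon of order $(q,q)$ is exactly the $(q+1)$-regular bipartite graph of girth $12$ on $2(q^{5}+q^{4}+q^{3}+q^{2}+q+1)$ vertices, and the split Cayley hexagon $H(q)$ furnishes such a geometry for every prime power $q$. You are right that the heavy lifting is in verifying that $H(q)$ really is a generalized hexagon (your conditions (ii) and (iii)), and that invoking the $G_{2}(q)$ action is the clean way to do it. You are also right that the restriction to odd $q$ in the statement is not needed for this construction; Benson's original paper handled the even-$q$ case separately via a different model, but $H(q)$ works uniformly. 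If this were to be written out as a self-contained proof you would need to either carry out the coordinate verification or cite a source that does (e.g.\ Tits, or the standard generalized polygon references), since the flag-transitivity of $G_{2}(q)$ on $H(q)$ is itself a nontrivial fact.
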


First, let us notice that since $Q$ is a regular bipartite graph,
it has color classes of equal size. Moreover, we may assume that $Q$
is connected, for otherwise we may add some edges to make it connected
without creating cycles. So we have the following corollary.
\begin{cor}
\label{girth 10} There exists a connected bipartite graph of girth
at least 10 with $n/2$ vertices in each color class such that every
vertex has degree at least $(n/2)^{1/5}(1-o(1))$. (So it contains
at least $(1-o(1))(n/2)^{6/5}$ edges.) 
\end{cor}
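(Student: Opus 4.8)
The plan is to read the parameters off Benson's graph $Q$ directly. Fix an odd prime power $q$ (there are infinitely many, so this yields an infinite family of examples) and let $Q$ be the $(q+1)$-regular, bipartite, girth-$12$ graph on $2(q^{5}+q^{4}+q^{3}+q^{2}+q+1)$ vertices given by Benson's theorem. Since $Q$ is regular and bipartite, summing degrees over each color class forces the two classes to have the same size; set $n:=2(q^{5}+q^{4}+q^{3}+q^{2}+q+1)$, so each class has exactly $n/2$ vertices. The girth of $Q$ is $12\ge 10$, and every vertex has degree exactly $q+1$.

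The only genuine work is the asymptotic bookkeeping relating $q$ and $n$. From $n/2=q^{5}+q^{4}+q^{3}+q^{2}+q+1$ we get $q^{5}\le n/2< q^{5}\cdot\frac{q}{q-1}$, hence $q\le (n/2)^{1/5}< q\,(q/(q-1))^{1/5}=q(1+o(1))$ as $q\to\infty$; that is, $(n/2)^{1/5}=q(1+o(1))$. Consequently the common degree satisfies $q+1\ge q=(n/2)^{1/5}(1-o(1))$, so every vertex has degree at least $(n/2)^{1/5}(1-o(1))$. For the edge count, $e(Q)=\tfrac12(q+1)\cdot n=(q+1)(q^{5}+q^{4}+q^{3}+q^{2}+q+1)=(q+1)\cdot\tfrac{n}{2}\ge (n/2)^{1/5}(1-o(1))\cdot\tfrac{n}{2}=(1-o(1))(n/2)^{6/5}$, as claimed. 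As $q\to\infty$ over odd prime powers, $n\to\infty$, so the corollary holds along a sequence $n\to\infty$.

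Finally I would dispatch connectivity. Benson's graph (the incidence graph of a generalized hexagon) is in fact connected, but even granting that $Q$ might split into several components, one can add, one at a time, a single edge joining a vertex of one component to a vertex of another. Such an edge is a bridge, so it lies on no cycle and the girth remains $12$; choosing the two endpoints in opposite sides of the respective bipartitions keeps the graph bipartite; and adding edges only raises degrees, so the degree lower bound is untouched. This produces the desired connected bipartite graph. The point to be careful about is just the estimate $(n/2)^{1/5}=q(1+o(1))$ — everything else is immediate — and if one wants the statement for \emph{every} large $n$ rather than along $n=2(q^{5}+\dots+1)$, one additionally uses that ratios of consecutive odd prime powers tend to $1$, which only affects the $o(1)$ terms.
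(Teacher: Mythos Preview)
Your proof is correct and follows essentially the same approach as the paper: take Benson's $(q+1)$-regular bipartite girth-$12$ graph, observe that regularity forces equal color classes, and add bridges to ensure connectivity without creating cycles. You supply more explicit detail than the paper does---in particular the verification that $(n/2)^{1/5}=q(1+o(1))$ and the care about preserving bipartiteness when connecting components---but the underlying argument is the same.
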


\begin{comment}
Btw we need degrees>=2 so that every edge is in a C6 
\end{comment}
\begin{thm}
\label{pasting2} There exists a $C_{8}$-free graph $G$ on $4n$
vertices with average degree at least $4n^{1/5}$ which is pasted
together from $C_{6}$'s. 
\end{thm}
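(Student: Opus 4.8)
The plan is to construct $G$ explicitly from the dense high‑girth bipartite graph of Corollary~\ref{girth 10}. Let $Q$ denote that graph: it is connected, bipartite with colour classes $A,B$ of size $n/2$, has girth at least $10$, and every vertex has degree $\ge (n/2)^{1/5}(1-o(1))$, so $e(Q)\ge(1-o(1))(n/2)^{6/5}$. I would build $G$ from $Q$ by a bounded local operation: to each vertex $v\in A\cup B$ attach a fixed small set of new vertices (the \emph{satellites} of $v$), joined to $v$ in a fixed pattern, and to each edge $ab\in E(Q)$ add a fixed bounded set of new edges among $a,b$ and the satellites of $a$ and $b$ so that $ab$ becomes an edge of a designated $6$‑cycle $C(ab)$; the pattern is chosen so that all the hexagons $C(ab)$ with $a$ fixed pass through one and the same sub‑path of $a$'s satellites (and likewise for $b$). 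One then verifies three things.

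First, $G$ is pasted together from $C_6$'s. Since $Q$ is connected, list its edges as $e_1,e_2,\dots$ so that each $e_i$ with $i\ge2$ shares an endpoint with $e_1\cup\dots\cup e_{i-1}$, and paste the hexagons $C(e_1),C(e_2),\dots$ in this order. If $e_i$ meets an earlier edge $e_j$ in a vertex $v$, then $C(e_i)$ and $C(e_j)$ both contain the common satellite sub‑path of $v$, which has an edge; hence every hexagon after the first shares an edge with the union of the previous ones. Choosing the satellite sub‑paths long enough that no satellite has degree $\le1$ guarantees in addition that every edge of $G$ lies on some hexagon, which is needed for the pasting description to make sense.

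Second, $G$ is $C_8$‑free. Collapsing each vertex's satellites onto it gives a map $\pi\colon V(G)\to V(Q)$; a cycle $Z$ of $G$ projects to a closed walk $\pi(Z)$ in $Q$. If the reduced form of $\pi(Z)$ is non‑trivial it has length $\ge\operatorname{girth}(Q)\ge10$, and since each step of the walk is ``inflated'' by the local gadget, $Z$ is far longer than $8$; so an $8$‑cycle must project to a trivial walk, i.e.\ it lies inside the union of the hexagons attached either to one edge of $Q$ or to two edges of $Q$ sharing a vertex. These are configurations of bounded size, so it suffices to check by inspection that none of them contains a cycle of length $8$. The crucial subtlety is that two $6$‑cycles meeting in a path of length exactly $2$ have symmetric difference an $8$‑cycle (two $2l$‑gons sharing an $s$‑path yield a $(4l-2s)$‑cycle); the satellite pattern must therefore be set up so that hexagons sharing a vertex of $Q$, as well as the hexagon built on a single edge of $Q$, overlap only in paths of length $1$ or $3$. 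Finally, for the counting: $G$ has $|V(Q)|$ plus $O(1)$ satellites per vertex, hence $O(n)$ vertices, which we may take to be $4n$; each edge of $Q$ contributes a bounded number of edges to $G$ and the hexagons on distinct edges of $Q$ are edge‑disjoint off the shared satellite paths, so $e(G)=\Theta(e(Q))=\Omega(n^{6/5})$, and the constants inside the gadget can be tuned so that the average degree $2e(G)/|V(G)|$ is at least $4n^{1/5}$ for large $n$.

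The hard part is the design of the local gadget: one must pick the number of satellites and the extra edges so that simultaneously every edge of $G$ is covered by a hexagon, any two hexagons sharing a vertex of $Q$ overlap in a path of length $1$ or $3$ (never exactly $2$), the ``two‑gadget'' configurations contain no $8$‑cycle at all, and the number of satellites per vertex stays bounded so that $|V(G)|=O(n)$. Once such a gadget exists, the connectivity argument for the pasting and the $\pi$‑projection argument for $C_8$‑freeness go through routinely. The same scheme, with hexagons replaced by $2l$‑gons and the girth of the host graph (supplied by Proposition~\ref{Prob_method} or a Benson‑type graph) raised so that overlapping $2l$‑gons never share a path of length $2l-k$, should yield the promised generalisation to $C_{2k}$‑free graphs pasted from $C_{2l}$'s for all $k>l\ge3$.
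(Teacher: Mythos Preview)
Your outline has the right shape\textemdash a dense bipartite host of girth $\ge 10$, hexagons built on top, pasting via connectivity, $C_8$-freeness via a projection back to the host\textemdash but it is not a proof, because the central object, the local gadget, is never constructed. You acknowledge this yourself (``the hard part is the design of the local gadget''); until that design is written down, none of the subsequent claims can be verified. In particular, your assertion that an $8$-cycle must lie in the union of hexagons attached to ``one edge of $Q$ or two edges of $Q$ sharing a vertex'' is unjustified: a null-homotopic closed walk of length $\le 8$ in $Q$ can traverse a subtree with up to four edges, so without knowing how many between-cluster edges the gadget places over each edge of $Q$ you cannot limit the inspection to two hexagons. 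You also implicitly assume that two hexagons meeting at a vertex of $Q$ intersect in a single path; depending on the gadget they may share an additional isolated vertex, in which case the ``overlap length $\ne 2$'' heuristic no longer controls all cycles in their union. Finally, the natural symmetric attempts (e.g.\ two satellites per vertex with $C(ab)=a\,a_1\,a_2\,b_2\,b_1\,b$) give overlap length exactly $2$ and hence a $C_8$; so producing a gadget with the listed properties is genuinely non-obvious.

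The paper's construction is concrete and does \emph{not} fit your per-edge, per-vertex-satellite template. It takes two disjoint copies $G_1,G_2$ of the girth-$10$ bipartite graph (on classes $\{a_i\},\{b_i\}$ and $\{a'_i\},\{b'_i\}$) and adds only the matching $\{b_ib'_i\}$; no further satellites are attached. The hexagons are indexed not by edges of the host but by \emph{cherries} $b_j a_i b_k$: each yields the $6$-cycle $a_i b_j b'_j a'_i b'_k b_k$. Pasting follows from connectivity of $G_1$, and $C_8$-freeness is a two-line argument with no gadget design and no configuration enumeration: any $C_8$ must use exactly two matching edges $b_ib'_i$, $b_jb'_j$; the remaining six edges split into a $b_i$\textendash$b_j$ path in $G_1$ and a $b'_i$\textendash$b'_j$ path in $G_2$, and copying the latter back into $G_1$ gives two $b_i$\textendash$b_j$ paths of total length $6$, hence a cycle of length at most $6$ in $G_1$, contradicting the girth.
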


To prove Theorem \ref{pasting2}, let us take a connected, bipartite
graph $G_{1}$ of girth at least 10 on $2n$ vertices such that each
vertex has degree at least $n^{1/5}(1-o(1))$ (and having $n$ vertices
in each color class). The existence of such a graph is guaranteed
by Corollary \ref{girth 10}. Let $a_{1},a_{2},\ldots,a_{n}$ and
$b_{1},b_{2},\ldots,b_{n}$ be the two color classes of $G_{1}$.
Now let $G_{2}$ be a copy of $G_{1}$ with vertices $a'_{1},a'_{2},\ldots,a'_{n}$
and $b'_{1},b'_{2},\ldots,b'_{n}$ and edge set $E(G_{2})=\{a'_{i}b'_{j}\mid a_{i}b_{j}\in E(G_{1})\}$.
Finally, the graph $G$ is defined to have the vertex set $V(G)=V(G_{1})\cup V(G_{2})$
and the edge set $E(G)=E(G_{1})\cup E(G_{2})\cup\{b_{i}b'_{i}\mid1\le i\le n\}$
(see Figure \ref{first pasting}). So $G$ has $4n$ vertices and
$2n^{6/5}(1-o(1))+n$ edges.

\begin{figure}
\begin{centering}
\begin{tikzpicture}[line cap=round,line join=round,>=triangle 45,x=1.0cm,y=1.0cm]
\begin{comment}
\clip(5.56,6.48) rectangle (5.62,6.48);
\end{comment}
\draw (4,4)-- (3,2);
\draw (4,4)-- (4,2);
\draw (4,4)-- (5,2);
\draw (4,4)-- (2,2);
\draw (6,4)-- (5,2);
\draw (6,4)-- (6,2);
\draw (6,4)-- (7,2);
\draw (6,4)-- (8,2);
\draw (2,2)-- (2,0);
\draw (1,2)-- (1,0);
\draw (3,2)-- (3,0);
\draw (4,2)-- (4,0);
\draw (5,2)-- (5,0);
\draw (6,2)-- (6,0);
\draw (7,2)-- (7,0);
\draw (8,2)-- (8,0);
\draw (9,2)-- (9,0);
\draw (10,2)-- (10,0);
\draw (4,-2)-- (4,0);
\draw (4,-2)-- (5,0);
\draw (4,-2)-- (3,0);
\draw (4,-2)-- (2,0);
\draw (6,-2)-- (5,0);
\draw (6,-2)-- (6,0);
\draw (6,-2)-- (7,0);
\draw (6,-2)-- (8,0);
\begin{scriptsize}
\fill [color=black] (1,4) circle (2.5pt);
\draw[color=black] (1.28,4.36) node {$a_1$};
\fill [color=black] (2,4) circle (2.5pt);
\draw[color=black] (2.28,4.36) node {$a_2$};
\fill [color=black] (3,4) circle (2.5pt);
\draw[color=black] (3.28,4.36) node {$a_3$};
\fill [color=black] (4,4) circle (2.5pt);
\draw[color=black] (4.28,4.36) node {$a_4$};
\fill [color=black] (5,4) circle (2.5pt);
\draw[color=black] (5.28,4.36) node {$a_5$};
\fill [color=black] (6,4) circle (2.5pt);
\draw[color=black] (6.28,4.36) node {$a_6$};
\fill [color=black] (7,4) circle (2.5pt);
\fill [color=black] (8,4) circle (2.5pt);
\fill [color=black] (9,4) circle (2.5pt);
\draw[color=black] (9.46,4.36) node {$a_{n-1}$};
\fill [color=black] (1,2) circle (2.5pt);
\draw[color=black] (1.26,2.36) node {$b_1$};
\fill [color=black] (2,2) circle (2.5pt);
\draw[color=black] (2.26,2.36) node {$b_2$};
\fill [color=black] (3,2) circle (2.5pt);
\draw[color=black] (3.26,2.36) node {$b_3$};
\fill [color=black] (4,2) circle (2.5pt);
\draw[color=black] (4.26,2.36) node {$b_4$};
\fill [color=black] (5,2) circle (2.5pt);
\draw[color=black] (5.26,2.36) node {$b_5$};
\fill [color=black] (6,2) circle (2.5pt);
\draw[color=black] (6.26,2.36) node {$b_6$};
\fill [color=black] (7,2) circle (2.5pt);
\fill [color=black] (8,2) circle (2.5pt);
\fill [color=black] (9,2) circle (2.5pt);
\draw[color=black] (9.44,2.36) node {$b_{n-1}$};
\fill [color=black] (1,0) circle (2.5pt);
\draw[color=black] (1.28,0.36) node {$b'_1$};
\fill [color=black] (2,0) circle (2.5pt);
\draw[color=black] (2.28,0.36) node {$b'_2$};
\fill [color=black] (3,0) circle (2.5pt);
\draw[color=black] (3.28,0.36) node {$b'_3$};
\fill [color=black] (4,0) circle (2.5pt);
\draw[color=black] (4.28,0.36) node {$b'_4$};
\fill [color=black] (5,0) circle (2.5pt);
\draw[color=black] (5.28,0.36) node {$b'_5$};
\fill [color=black] (6,0) circle (2.5pt);
\draw[color=black] (6.28,0.36) node {$b'_6$};
\fill [color=black] (7,0) circle (2.5pt);
\fill [color=black] (8,0) circle (2.5pt);
\fill [color=black] (9,0) circle (2.5pt);
\draw[color=black] (9.46,0.36) node {$b'_{n-1}$};
\fill [color=black] (1,-2) circle (2.5pt);
\draw[color=black] (1.3,-1.64) node {$a'_1$};
\fill [color=black] (2,-2) circle (2.5pt);
\draw[color=black] (2.3,-1.64) node {$a'_2$};
\fill [color=black] (3,-2) circle (2.5pt);
\draw[color=black] (3.3,-1.64) node {$a'_3$};
\fill [color=black] (4,-2) circle (2.5pt);
\draw[color=black] (4.3,-1.64) node {$a'_4$};
\fill [color=black] (5,-2) circle (2.5pt);
\draw[color=black] (5.3,-1.64) node {$a'_5$};
\fill [color=black] (6,-2) circle (2.5pt);
\draw[color=black] (6.3,-1.64) node {$a'_6$};
\fill [color=black] (7,-2) circle (2.5pt);
\fill [color=black] (8,-2) circle (2.5pt);
\fill [color=black] (9,-2) circle (2.5pt);
\draw[color=black] (9.48,-1.64) node {$a'_{n-1}$};
\fill [color=black] (10,4) circle (2.5pt);
\draw[color=black] (10.36,4.36) node {$a_{n}$};
\fill [color=black] (10,2) circle (2.5pt);
\draw[color=black] (10.26,2.36) node {$b_n$};
\fill [color=black] (10,0) circle (2.5pt);
\draw[color=black] (10.28,0.36) node {$b'_n$};
\fill [color=black] (10,-2) circle (2.5pt);
\draw[color=black] (10.3,-1.64) node {$a'_n$};
\end{scriptsize}
\end{tikzpicture} 
\par\end{centering}
\caption{First pasting}
\label{first pasting} 
\end{figure}
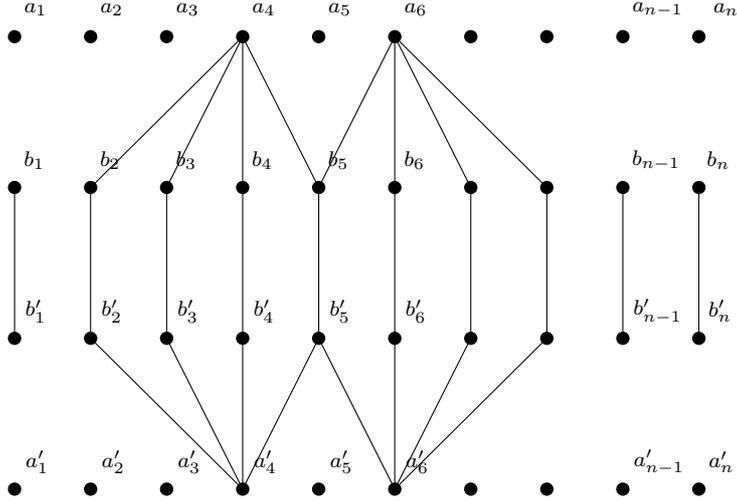

To show that $G$ is pasted together from $C_{6}$'s, we have to show
that every edge is contained in a $C_{6}$, and for any two edges
$e_{1},e_{2}\in E(G)$, there is a sequence $O_{1},O_{2},\ldots,O_{m}$
of $C_{6}$'s in $G$ such that for any $1\le i\le m-1$, $O_{i}$
and $O_{i+1}$ share at least one edge, and $e_{1}$ and $e_{2}$
are edges of $O_{1}$ and $O_{m}$ respectively. The graph can then
be built starting from an arbitrary fixed edge. It is easy to see
that every edge is contained in some $C_{6}$ of the form $a_{i}b_{j}b'_{j}a'_{i}b'_{k}b_{k}$,
and so we can assume that both $e_{1}$ and $e_{2}$ are of the form
$a_{i}b_{j}$. Let $(a_{i_{0}})b_{i_{1}}a_{i_{2}}b_{i_{3}}a_{i_{4}}b_{i_{5}}\ldots a_{i_{t-1}}b_{i_{t}}(a_{i_{t+1}})$
be a path starting with $e_{1}$ and ending with $e_{2}$, with $e_{1}=a_{i_{0}}b_{i_{1}}$
or $e_{1}=b_{i_{1}}a_{i_{2}}$, and $e_{2}=a_{i_{t-1}}b_{i_{t}}$
or $e_{2}=b_{i_{t}}a_{i_{t+1}}$ (such a path exists since $G_{1}$
is connected). Then the path $b'_{i_{1}}a'_{i_{2}}b'_{i_{3}}a'_{i_{4}}b'_{i_{5}}\ldots a'_{i_{t-1}}b'_{i_{t}}$
is contained in $G_{2}$. These two paths together with the edges
$b_{i_{1}}b'_{i_{1}},b_{i_{2}}b'_{i_{2}},\ldots,b_{i_{t}}b'_{i_{t}}$
give the desired sequence of $C_{6}$'s (together with an arbitrary
$C_{6}$ of the form $a_{i_{0}}b_{i_{1}}b'_{i_{1}}a'_{i_{0}}b'_{j}b_{j}$
if $e_{1}=a_{i_{0}}b_{i_{1}}$, and a $C_{6}$ of the form $a_{i_{t+1}}b_{i_{t}}b'_{i_{t}}a'_{i_{t+1}}b'_{k}b_{k}$
if $e_{2}=b_{i_{t}}a_{i_{t+1}}$).

It remains to show that $G$ is $C_{8}$-free. Suppose for a contradiction
that it has a $C_{8}$. Since the graph $G_{1}$ is of girth at least
10, this $C_{8}$ cannot be completely in $G_{1}$ or $G_{2}$. So
it has to contain at least one edge from each of the three sets $E(G_{1})$,
$E(G_{2})$ and $\{b_{i}b'_{i}\mid1\le i\le n\}$. Moreover, it is
easy to see that it contains exactly two edges from $\{b_{i}b'_{i}\mid1\le i\le n\}$,
say $b_{i}b'_{i}$ and $b_{j}b'_{j}$. So there is a path of $q$
edges between $b_{i}$ and $b_{j}$ in $G_{1}$ and a path of $r$
edges between $b'_{i}$ and $b'_{j}$ in $G_{2}$ such that $q+r=6$.
Let these paths be $b_{i}a_{i_{1}}b_{i_{2}}\ldots a_{i_{q-1}}b_{j}$
and $b'_{i}a'_{j_{1}}b'_{j_{2}}\ldots a'_{j_{r-1}}b'$ respectively.
By construction, the second path in $G_{2}$ implies that $G_{1}$
contains the path $b_{i}a_{j_{1}}b_{j_{2}}\ldots a_{j_{r-1}}b_{j}$,
which, together with $b_{i}a_{i_{1}}b_{i_{2}}\ldots a_{i_{q-1}}b_{j}$,
would produce a cycle of length 4 or 6 in $G_{1}$, a contradiction. 
\begin{rem}
We may modify the above construction as described below to find a
pasting of $C_{2l}$'s to produce a $C_{2k}$-free graph $G$ for
any given integers $k>l\ge3$ and having average degree at least $\Omega\!\left(n^{1/(2k+2)}\right)$.
\end{rem}

\begin{proof}
A graph of girth $2k+1$ and having $\Omega\!\left(n^{1+1/(2k+1)}\right)$
edges exists by applying Proposition \ref{Prob_method} with $r=2$.
So it has average degree $\Omega\!\left(n^{1/(2k+1)}\right)$. It
is easy to find a bipartite subgraph of such a graph, with equal color
classes and having a constant fraction of all the edges. Then we can
delete vertices of degree $1$ without decreasing its average degree,
so we can assume it has minimum degree at least $2$, and as usual,
we can assume it is connected, because otherwise we can add edges
without creating a cycle to make it connected. Let $G_{1}$ be this
bipartite, connected graph of girth greater than $2k$ on $2n$ vertices
with average degree $\Omega\!\left(n^{1/(2k+1)}\right)$. Then let
$G_{2}$ be defined in the same way as in the above proof (based on
$G_{1}$). However, now, for each $i$ we connect the vertices $b_{i}\in V(G_{1})$
and $b'_{i}\in V(G_{2})$ by a path containing $l-2$ edges and let
the resulting graph be $G$. Using the same argument as in the above
proof, we can see that this gives a pasting of $C_{2l}$'s and that
$G$ is $C_{2k}$-free.
\end{proof}

\subsection{Second example}

A hypergraph $H$ is \emph{connected} if for any two vertices $u,v\in V(H)$,
there exist hyperedges $h_{i}\in E(H)$, $1\le i\le m$, such that
$u\in h_{1},v\in h_{m}$ and $h_{i}\cap h_{i+1}\not=\emptyset$ for
all $1\le i\le m-1$. %
\begin{comment}
Merge with Berge-cycle definition in the making-bipartite article? 
\end{comment}
\begin{comment}
Yes 
\end{comment}
A minimal collection of such hyperedges is called a path between $u$
and $v$ in $H$. We may assume that the hypergraph provided by Proposition
\ref{Prob_method} is connected, for otherwise we can simply take
a connected component of it containing the biggest number of hyperedges.
\begin{thm}
\label{thm:firstpasting}There exists a $C_{8}$-free graph $G$ on
$2n$ vertices with average degree at least $6\cdot n^{1/9}$ which
is pasted together from $C_{6}$'s. 
\end{thm}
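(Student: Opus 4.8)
The plan is to obtain $G$ by \emph{inflating} a high-girth hypergraph, in the spirit of the constructions of Section~\ref{sec:Proof-of-MAINTheorem}. Using Proposition~\ref{Prob_method} with $r=3$ and $s=9$, take a $3$-uniform hypergraph $H$ on $n$ vertices with girth at least $9$ and more than $n^{1+1/9}$ hyperedges, and (as remarked above) assume $H$ is connected by passing to a component with the most hyperedges. Build $G$ as follows: split every vertex $v$ of $H$ into two vertices $v^{-},v^{+}$ joined by an edge, the \emph{spine edge} at $v$; and for every hyperedge $e=\{x,y,z\}$, after fixing once and for all a cyclic order $(x,y,z)$ on it, add the three \emph{rim edges} $x^{+}y^{-}$, $y^{+}z^{-}$, $z^{+}x^{-}$. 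Then the six vertices $x^{-},x^{+},y^{-},y^{+},z^{-},z^{+}$ span the $6$-cycle $x^{-}x^{+}y^{-}y^{+}z^{-}z^{+}x^{-}$, which I call the \emph{hexagon} of $e$. Since girth greater than $2$ forces $H$ to be linear, distinct hyperedges contribute pairwise distinct rim edges, and these are distinct from all spine edges; hence $|V(G)|=2n$ and $e(G)$ is linear in $e(H)$. A direct count then gives average degree $\Omega(n^{1/9})$, and — using that the hypergraph furnished by Proposition~\ref{Prob_method} can in fact be taken with $\gg n^{1+1/9}$ hyperedges when $s=9$ — at least $6n^{1/9}$.

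Two of the three required properties are then quick. That $G$ is pasted together from $C_{6}$'s: every edge of $G$ lies in some hexagon — a rim edge lies in its own hexagon, and the spine edge at $v$ lies in the hexagon of every hyperedge $e\ni v$ — and two hexagons share an edge exactly when the corresponding hyperedges meet, namely they share the spine edge at the common vertex. Since $H$ is connected, the ``hexagon intersection graph'' is connected, so ordering the hexagons along one of its spanning trees exhibits $G$ as built from a single $C_{6}$ by repeatedly attaching a new $C_{6}$ sharing an edge with the union so far. And the edge count above gives the average degree bound.

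The heart of the matter, and the step I expect to be the main obstacle, is showing $G$ is $C_{8}$-free; this is where the girth hypothesis on $H$ is used. Inside each hexagon the three rim edges form a perfect matching of its six vertices, and the spine edges form a matching of all of $V(G)$; so, contracting the spine edges, a cycle of $G$ of length $\ell$ projects to a closed walk of length at most $\ell$ in the \emph{shadow graph} of $H$ — the graph on $V(H)$ obtained by replacing each hyperedge by a triangle on its three vertices, these triangles being pairwise edge-disjoint by linearity. I would then argue that a $C_{8}$ of $G$ projects to a closed walk $W$ of length between $2$ and $8$ in the shadow graph, and that any such $W$ reduces — shortcutting every two consecutive edges of one triangle by the third edge of that triangle — to a cycle of the shadow graph using at most one edge of each triangle, which is precisely a Berge-cycle of $H$ of length at most $8$; this contradicts that $H$ is linear (no Berge-$2$-cycle) and has girth at least $9$ (no Berge-cycle of length $3,\dots,8$). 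The delicate points are (i) that the projection argument is valid no matter how the $C_{8}$ threads the split vertices and the rim and spine edges, and (ii) that the degenerate outcomes, where $W$ merely winds around one or two triangles of the shadow, cannot actually come from a $C_{8}$ — both handled by observing that a $C_{8}$ has only $8$ vertices, which is too few to accommodate the distinct vertices those configurations would force. Once $C_{8}$-freeness is in hand, the theorem follows, since the parameters have been chosen so that the average degree of $G$ is at least $6n^{1/9}$.
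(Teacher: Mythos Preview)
Your construction is exactly the paper's: split each vertex of a girth-$\ge 9$, $3$-uniform hypergraph into a pair joined by a ``fat''/spine edge, and replace each hyperedge by a $C_6$ using three ``thin''/rim edges. The pasting argument and the edge count match the paper as well.

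The one place where your sketch has a genuine gap is the $C_8$-freeness step. Two points:
\begin{itemize}
\item You need, and should state, the analogue of the paper's Claim: between any two spine edges there is at most one rim edge (equivalently, distinct rim edges project to distinct edges of the shadow graph). This is what guarantees that the projected closed walk $W$ is a \emph{circuit} (no repeated edges), and it follows from linearity of $H$. Without it your ``shortcutting'' move is not well-founded.
\item More importantly, the shortcutting reduction to a Berge-cycle does not survive the key case. Once you know $W$ is a circuit of length $4$--$8$ (not $2$--$8$: spine edges form a matching, so a $C_8$ contains at most four of them) in the shadow of a girth-$\ge 9$ linear $3$-uniform hypergraph, the only possibility that is \emph{not} immediately a short Berge-cycle is the length-$6$ circuit $a\,b\,c\,d\,e\,c\,a$ coming from two triangles $\{a,b,c\}$ and $\{c,d,e\}$ meeting at $c$. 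If you try to shortcut $ab,bc\mapsto ac$, the edge $ac$ now appears twice and you no longer have a circuit; so you cannot reduce this to a nontrivial Berge-cycle of $H$. Your proposed fix, ``a $C_8$ has only $8$ vertices, too few for these configurations,'' does not dispose of this case either: the two hexagons together have $10$ vertices, so a vertex count alone does not exclude an $8$-cycle. The paper handles this residual case directly: the two hexagons share exactly the spine edge at $c$, and two $C_6$'s glued along a single edge have only cycles of length $6$ and $10$, hence no $C_8$.
\end{itemize}
With these two fixes your argument becomes the paper's argument verbatim.
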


To prove Theorem \ref{thm:firstpasting}, we apply Proposition \ref{Prob_method}
to find a (connected) 3-uniform hypergraph $H_{1}$ on $n$ vertices
with girth at least $9$ and more than $n^{1+1/9}$ hyperedges. Let
$V(H_{1})=\{u_{1},u_{2},\ldots,u_{n}\}$. Replace each vertex $u_{i}\in V(H_{1})$
with a pair of vertices $u_{i},u'_{i}$ so that every hyperedge containing
$u_{i}$ now contains both $u_{i}$ and $u'_{i}$. This produces a
$6$-uniform hypergraph which we denote by $H_{2}$.

\begin{comment}
Let us take the $3$-uniform (connected) hypergraph $H_{1}$ on $n$
vertices with girth at least $9$ and having more than $n^{1+\frac{1}{9}}$
hyperedges. The existence of such a hypergraph is guaranteed by Proposition
\ref{Prob_method}. Let $V(H_{1})=\{u_{1},u_{2},\ldots,u_{n}\}$.
Replace each vertex $u_{i}\in V(H_{1})$ with a pair of vertices $u_{i},u'_{i}$
so that every hyperedge containing $u_{i}$ now contains both $u_{i}$
and $u'_{i}$. This will produce a $6$-uniform hypergraph, say $H_{2}$. 
\end{comment}

Now we construct the desired graph $G$ from $H_{2}$ in the following
fashion. If $\{u_{i},u'_{i},u_{j},u'_{j},\allowbreak u_{k},u'_{k}\}$
is a hyperedge in $H_{2}$ with $1\le i\le j\le k\le n$, then we
add the edges $u_{i}u'_{i},u'_{i}u_{j},u_{j}u'_{j},u'_{j}u_{k},u_{k}u'_{k},u'_{k}u_{i}$
to $E(G)$. We repeat this procedure for every hyperedge of $H_{2}$.
Let us call the edges $u_{i}u'_{i}\in E(G)$ ($1\le i\le n$) \emph{fat}
edges and the rest of the edges of $G$ \emph{thin} edges. 

Note that two fat edges never share a vertex. We claim that a thin
edge cannot be added by two different hyperedges of $H_{2}$. Suppose
by contradiction that $u'_{i}u_{j}$ is a thin edge added by two different
hyperedges $h_{1},h_{2}$ of $H_{2}$. Then since a hyperedge of $H_{2}$
either contains both vertices $u_{r},u'_{r}$ or neither of them for
any given $1\le r\le n$, it follows that $\{u_{i},u'_{i},u_{j},u'_{j}\}\subset h_{1}$
and $\{u_{i},u'_{i},u_{j},u'_{j}\}\subset h_{2}$. Consider the two
hyperedges in $H_{1}$ which correspond to $h_{1}$ and $h_{2}$.
They both contain $u_{i}$ and $u_{j}$; so they intersect in at least
two vertices, which is a contradiction since $H_{1}$ is a linear
hypergraph. (Notice, on the other hand, that a fat edge may have been
added by several hyperedges.) So each hyperedge in $H_{2}$ adds precisely
$3$ new thin edges to $E(G)$. Therefore the number of thin edges
in $G$ is three times the number of hyperedges in $H_{2}$. Since
the number of fat edges is $n$, we have $\abs{E(G)}=3\cdot n^{1+1/9}+n$.
Thus it has the desired average degree.

Since $H_{1}$ is connected, we can construct it by adding hyperedges
one by one, in such a way that each hyperedge intersects one of the
previous hyperedges in at least one vertex. We can construct $H_{2}$
by adding the $C_{6}$'s corresponding to the hyperedges of $H_{1}$
in the same order; this shows that $G$ is pasted together from $C_{6}$'s. 

It only remains to show that $G$ is $C_{8}$-free. We say an edge
is between two edges $e_{1}$, $e_{2}$ if one of its end vertices
is in $e_{1}$ and the other is in $e_{2}$.
\begin{claim}
\label{claim:onethinbetweentwofat}There is at most one thin edge
between any two fat edges of $G$.
\end{claim}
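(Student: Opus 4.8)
The plan is to argue by contradiction, combining the cyclic structure of the $C_{6}$'s glued onto the hyperedges of $H_{2}$ with the linearity of $H_{1}$.

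First I would identify the possible thin edges between two fixed fat edges $u_{i}u'_{i}$ and $u_{j}u'_{j}$ (with $i\neq j$). Every edge of $G$ is either a fat edge $u_{r}u'_{r}$ or, by construction, has the form $u'_{r}u_{s}$ with $r\neq s$; thus every thin edge joins a primed vertex to an unprimed vertex of a different index, and in particular neither $u_{i}u_{j}$ nor $u'_{i}u'_{j}$ is an edge of $G$. Hence the only candidates for a thin edge between $u_{i}u'_{i}$ and $u_{j}u'_{j}$ are $u'_{i}u_{j}$ and $u'_{j}u_{i}$, so it suffices to show that these two edges are not both present in $G$.

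Suppose both are. Let $h$ be a hyperedge of $H_{2}$ that contributes the edge $u'_{i}u_{j}$. I claim that $h$ does not also contribute $u'_{j}u_{i}$: if $\{u_{p},u'_{p},u_{q},u'_{q},u_{r},u'_{r}\}$ is a hyperedge of $H_{2}$ with $p<q<r$, then its three thin edges are $u'_{p}u_{q}$, $u'_{q}u_{r}$, $u'_{r}u_{p}$, and orienting each of these from its primed endpoint to its unprimed endpoint yields the directed triangle $p\to q\to r\to p$, which has no pair of anti-parallel edges. So no single hyperedge can supply both the edge oriented $i\to j$ (namely $u'_{i}u_{j}$) and the edge oriented $j\to i$ (namely $u'_{j}u_{i}$); therefore $u'_{j}u_{i}$ is contributed by some hyperedge $h'\neq h$.

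To finish, note that a hyperedge of $H_{2}$ contains $u_{r}$ if and only if it contains $u'_{r}$; since $h$ contains $u'_{i}$ and $u_{j}$, it contains all four of $u_{i},u'_{i},u_{j},u'_{j}$, and likewise for $h'$. Passing back to $H_{1}$, the two distinct hyperedges of $H_{1}$ corresponding to $h$ and $h'$ both contain the distinct vertices $u_{i}$ and $u_{j}$, so they meet in at least two vertices, contradicting the linearity of $H_{1}$. The one step that needs a little care is the middle one — checking that the three thin edges of a single attached $C_{6}$, once oriented from primed to unprimed endpoints, really form a consistently oriented triangle, so that one hyperedge cannot produce both $u'_{i}u_{j}$ and $u'_{j}u_{i}$; granting this, everything else is routine once the two candidate thin edges have been isolated.
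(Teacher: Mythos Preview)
Your proof is correct and follows essentially the same approach as the paper: both arguments observe that any thin edge between $u_iu'_i$ and $u_ju'_j$ forces its contributing hyperedge of $H_2$ to contain all four vertices, invoke the linearity of $H_1$ to limit such hyperedges to at most one, and note that a single hyperedge contributes at most one thin edge between a given pair of fat edges. Your version is a bit more explicit in isolating the two candidate thin edges and in verifying the last point via the oriented-triangle observation, whereas the paper states it more tersely, but the underlying argument is the same.
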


\begin{proof}
Consider any two fat edges $u_{i}u'_{i}$ and $u_{j}u'_{j}$ of $G$.
As noted earlier, any thin edge between them is added by a unique
hyperedge $h$ of $H_{2}$, and $h$ contains all four vertices $u_{i},u'_{i},u_{j},u'_{j}$.
Because of the linearity of $H_{1}$, no hyperedge of $H_{2}$ other
than $h$ may contain all four vertices $u_{i},u'_{i},u_{j},u'_{j}$
Now note that in our procedure, any hyperedge of $H_{2}$ adds at
most one thin edge between any two fat edges contained in it, proving
the claim.
\end{proof}
Now suppose for a contradiction that $G$ contains a $C_{8}$. Since
no two fat edges in $G$ share a vertex, there can be at most four
fat edges in this $C_{8}$. Contract every pair of vertices $u_{i},u'_{i}$
in $G$ into a single vertex $u_{i}$. Then this $C_{8}$ would become
a closed walk $C'$ of length at most $8$ and at least $4$ (only
thin edges remain after contraction). While this closed walk may have
repeated vertices, we show that it cannot have repeated edges (i.e.,
it is actually a circuit). Suppose that after contracting every pair
of vertices $u_{i},u'_{i}$ to $u_{i}$, some two thin edges $xy$
and $zw$ coincide. Then, for some $i$ and $j$, we have $x,z\in\{u_{i},u'_{i}\}$
and $y,w\in\{u_{j},u'_{j}\}$. Between the fat edges $u_{i}u'_{i}$
and $u_{j}u'_{j}$, there are two thin edges (namely $xy$ and $zw$),
contradicting Claim \ref{claim:onethinbetweentwofat}.

The 2-shadow of a hypergraph $H$ is the graph which contains an edge
$uv$ if and only if there is a hyperedge of $H$ which contains $u$
and $v$. $C'$ must be contained in the $2$-shadow of $H_{1}$.
Since $H_{1}$ has girth at least 9, it is not difficult to see that
the only possible length of a circuit in its $2$-shadow that is between
$4$ and $8$ is $6$ and it must be of the form $ab,bc,ce,ed,dc,ca$
(notice that $c$ is a repeated vertex). Therefore, the original $C_{8}$
in $G$ must be contained in the set of edges added by the hyperedges
$\{a,a',b,b',c,c'\}$ and $\{c,c',d,d',e,e'\}$ of $H_{2}$, but this
is impossible as these edges consist of two $C_{6}$'s sharing exactly
one edge.

\section*{Acknowledgments\phantomsection\addcontentsline{toc}{section}{Acknowledgements}}

We are grateful to Marianna Bolla, Ervin Gy\H ori and D\"om\"ot\"or
P\'alv\"olgyi for pointing us to helpful references and for useful
discussions. The authors Methuku and Tompkins were supported by the
National Research, Development and Innovation Office \textendash{}
NKFIH under the grant K116769.

\bibliographystyle{plain}
\bibliography{bibliography1}

\end{document}